\documentclass{sig-alt-full}

\overfullrule=5mm

\usepackage[utf8]{inputenc}
\usepackage[T1]{fontenc}
\usepackage{lmodern}

\usepackage{comment}
\usepackage{amssymb,amsmath}
\usepackage{theorem}
\usepackage{graphics}
\usepackage{amsfonts}
\usepackage{mathrsfs}
\usepackage{amscd}
\usepackage{color}
\usepackage{url}

\hyphenation{bi-de-gree}
\hyphenation{be-longs}




\newdef{defi}{Definition}
\newtheorem{thm}{Theorem}
\newtheorem{prop}{Proposition}
\newtheorem{cor}{Corollary}



\arraycolsep2pt


\setlength{\parskip}{-6pt}
\linespread{0.92}
\setlength{\columnsep}{0.5cm}

\begin{document}
\conferenceinfo{ISSAC 2018,}{16--19 July 2018, New York, USA} 
\CopyrightYear{2018} 
\crdata{}

\title{A symplectic Kovacic's algorithm in dimension 4}
\newfont{\authfntsmall}{phvr at 11pt}
\newfont{\eaddfntsmall}{phvr at 9pt}

\numberofauthors{3} 
\author{
\alignauthor
Thierry Combot\\
       \affaddr{Univ. de Bourgogne (France)}\\
       \email{thierry.combot@u-bourgogne.fr}
\alignauthor
Camilo Sanabria\\
       \affaddr{Universidad de los Andes (Colombia)}\\
       \email{c.sanabria135@ uniandes.edu.co}	
}
\date{today}

\maketitle

\begin{abstract} 
Let $L$ be a $4$th order differential operator with coefficients in $\mathbb{K}(z)$, with $\mathbb{K}$ a computable algebraically closed field. The operator $L$ is called symplectic when up to rational gauge transformation, the fundamental matrix of solutions $X$ satisfies $X^t J X=J$ where $J$ is the standard symplectic matrix. It is called projectively symplectic when it is projectively equivalent to a symplectic operator. We design an algorithm to test if $L$ is projectively symplectic. Furthermore, based on Kovacic's algorithm, we design an algorithm that computes Liouvillian solutions of projectively symplectic operators of order $4$. Moreover, using Klein's Theorem, algebraic solutions are given as pullbacks of standard hypergeometric equations.
\end{abstract}

\bigskip
\noindent {\bf Categories and Subject Descriptors:} 34M15 \\


\noindent {\bf Keywords:} Differential Galois theory, Symplectic differential systems, Liouvillian functions, Kovacic's algorithm

\section{Introduction}

A solution to a linear ordinary differential equation is called Liouvillian over the base field if it is in a field extension obtained by successively adjoining antiderivative, hyperexponential or algebraic functions. The solutions to a linear ordinary differential equation are all Liouvillian if and only if the connected component of its differential Galois group is solvable. Existing algorithms computing Liouvillian solutions for linear ordinary differential equations with coefficients in $\mathbb{K}(z)$ are based on Kovacic algorithm \cite{KOVACIC1986,ULMER1996}. For equations of order $n$, these algorithms require a classification of  all Lie groups in $GL_n(\mathbb{K})$ whose identity components are solvable, a procedure to identify whether the differential Galois group of the equation corresponds to one of these Lie groups and in such case to which one, and finally an algorithm to effectively compute the solutions. The procedure of identifying the differential Galois group relies on computing the invariants and semi-invariants of the equation (i.e. the rational and hyperexponential solutions to the symmetric powers). The main problem is that, even if the complete list of possible groups is known up to dimension $4$, effective calculations of the invariants and semi-invariants is not available because identifying the differential Galois group requires computations of hyperexponential or rational solutions of very high order. Because of this, a fully effective algorithm for computing Liouvillian solutions has for now been restricted to order $2$, which is precisely the Kovacic algorithm. This article intends to extend the Kovacic algorithm up to order $4$, under the assumption that the equation is symplectic.

In dynamical systems, many physical problems involving conservative forces admit a Hamiltonian formulation, and therefore the conservation of a non-degenerated $2$-form, called the symplectic structure. When the system is linearised for analysis of perturbations, the symplectic structure defines a symplectic structure on the linearised system. In the study of the linearised system one often requires the computation of its solutions, and up to now, only in very specific cases, explicitly when the equations split in order $2$ equations, this solutions can be effectively computed using Kovacic's algorithm. Now, every symplectic system is even dimensional. In dimension $2$, symplectic systems are unimodular systems because $SP_2(\mathbb{K})=SL_2(\mathbb{K})$. In general, we have $SP_{2n}(\mathbb{K})\subset SL_{2n}(\mathbb{K})$ and this inclusion is strict for $n>1$. Moreover, for $n>1$, $SP_{2n}(\mathbb{K})$ properly contains $SL_{2}(\mathbb{K})^n$, and thus symplectic systems cannot be in general uncoupled in order $2$ systems.

\begin{defi}
A matrix $M\in GL_{2n}(\mathbb{K})$ is symplectic if it satisfies $M^tJM=J$ where
$$J=\left(\begin{array}{cc} 0&I_n\\ -I_n&0 \end{array}\right),$$
is the standard symplectic form. It is projectively symplectic if it satisfies $M^tJM=\lambda J$ for some $\lambda\in\mathbb{K}^*$. The group of symplectic matrices is denoted $SP_{2n}(\mathbb{K})$ and the group of projectively symplectic matrices $PSP_{2n}(\mathbb{K})$.
\end{defi} 

\begin{defi}
The Lie algebra of $SP_{2n}(\mathbb{K})$ and $PSP_{2n}(\mathbb{K})$ are respectively
$$\mathfrak{sp}_{2n}(\mathbb{K}) =\{M\in M_{2n}(\mathbb{K}), \; M^t J+JM=0\},$$
$$\mathfrak{psp}_{2n}(\mathbb{K})=\{M\in M_{2n}(\mathbb{K}), \; \exists \lambda\in\mathbb{K}, \; M^t J+JM=\lambda J\}.$$
\end{defi}

Let us recall that the Galois group of a differential operator $L$ of order $n$ with coefficients in $\mathbb{K}(z)$ is the group of differential automorphisms of the differential field generated by the solutions of $L$ fixing the base field $\mathbb{K}(z)$. This group is always isomorphic to a Lie subgroup of $GL_n(\mathbb{K})$.

\begin{defi}
An operator $L$ of order $2n$ is symplectic (respectively projectively symplectic) when its Galois group is isomorphic to a subgroup of $SP_{2n}(\mathbb{K})$ (resp. of $PSP_{2n}(\mathbb{K})$).
\end{defi}

Writing $L(x)=0$ as a differential system $X'=AX$ in dimension $n$, Kolchin-Kovacic Theorem ensures us that if the Galois group of $L$ is contained in a connected Lie group $G$, then there exists a $\mathbb{K}(z)$ gauge transformation such that the differential system writes $X'=AX$ with $A$ in the Lie algebra of $G$ \cite{APARICIO2013}. We can thus also use this property to equivalently define symplectic/projective symplectic operators

\begin{defi}
An operator $L$ of order $2n$ is symplectic, respectively projectively symplectic, if there exists an invertible matrix $P$ with coefficients in $\mathbb{K}(z)$ such that we have respectively
$$ P^{-1}AP+P'P \in \mathfrak{sp}_{2n}(\mathbb{K}(z)),\quad P^{-1}AP+P'P \in \mathfrak{psp}_{2n}(\mathbb{K}(z))$$
where $A$ is the companion matrix associated to $L$.
\end{defi}

Linear ordinary differential equations arising from problems in physics with a Hamiltonian formulation are symplectic, however the base field is not always the field of coefficients $\mathbb{K}(z)$. In many cases this is due to algebraic changes of the independent variable, which can introduce algebraic extensions of the base field. In particular, it is possible that the linear system is no longer symplectic, but its Galois group is included in a finite extension of $SP_{2n}(\mathbb{K})$ in $PSP_{2n}(\mathbb{K})$. However these systems are still projectively symplectic (see Proposition \ref{prop6}).
Throughout the paper we assume familiarity with the Picard-Vessiot theory and the tensorial constructions of differential modules \cite{VANDERPUT2003}, as well as with the algorithms to factor linear ordinary differential operators \cite{VANHOEIJ1997}, to find their rational \cite{BARKATOU1999} and hyperexponential solutions \cite{PFLUGEL1997} over $\mathbb{K}(z)$ or over a quadratic extensions of $\mathbb{K}(z)$ \cite{BURGER2004}.

\section{Checking symplecticity}

\begin{prop}\label{propsymp}
Let $L$ be a $2n$-th order operator with coefficients in $\mathbb{K}(z)$ and $A$ its companion matrix. The operator $L$ is symplectic (respectively projectively symplectic) if and only if there exists an invertible antisymmetric matrix $W$ with coefficients in $\mathbb{K}(z)$ such that
$$A^tW+WA+W'=0$$
and respectively for projective symplectic a $\lambda\in\mathbb{K}(z)$ such that
\begin{equation}\label{eqsymp}
A^tW+WA+W'+\lambda W=0.
\end{equation}
The matrix $W$ will be called a symplectic structure associated to $L$, and $\lambda$ its multiplier.
\end{prop}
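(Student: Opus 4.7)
The approach is to recognise the matrix $W$ as the pull-back of the standard symplectic form $J$ by a gauge transformation that puts the companion matrix into (projectively) symplectic form, and to convert the algebraic condition on the gauged matrix into the differential equation on $W$ stated in \eqref{eqsymp}.

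For the forward implication, fix an invertible $P\in GL_{2n}(\mathbb{K}(z))$ for which $B:=P^{-1}AP-P^{-1}P'$ lies in $\mathfrak{psp}_{2n}(\mathbb{K}(z))$, so that $B^tJ+JB=\mu J$ for some $\mu\in\mathbb{K}(z)$ (with $\mu=0$ in the symplectic case). Set $W:=P^{-t}JP^{-1}$, which is manifestly antisymmetric and invertible over $\mathbb{K}(z)$. Substituting $A=PBP^{-1}+P'P^{-1}$ into $A^tW+WA$, and using the derivative identities $(P^{-1})'=-P^{-1}P'P^{-1}$ and $(P^{-t})'=-P^{-t}(P')^tP^{-t}$ in $W'$, the $P'$-contributions cancel pairwise and one is left with
\[
A^tW+WA+W' \;=\; P^{-t}(B^tJ+JB)P^{-1} \;=\; \mu\, W.
\]
Thus $W$ satisfies \eqref{eqsymp} with $\lambda=-\mu$, and $\lambda=0$ exactly when $L$ is symplectic.

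For the converse, let $W$ be any invertible antisymmetric matrix over $\mathbb{K}(z)$ satisfying \eqref{eqsymp} with some $\lambda\in\mathbb{K}(z)$. The classical symplectic basis theorem holds over any field of characteristic different from $2$; applied to $W$ over $\mathbb{K}(z)$, it produces $P\in GL_{2n}(\mathbb{K}(z))$ with $P^tWP=J$, equivalently $W=P^{-t}JP^{-1}$. Running the previous computation in reverse, the identity $A^tW+WA+W'=-\lambda W$ translates term by term into $B^tJ+JB=-\lambda J$ for $B:=P^{-1}AP-P^{-1}P'$, so $B\in\mathfrak{psp}_{2n}(\mathbb{K}(z))$ (and $B\in\mathfrak{sp}_{2n}(\mathbb{K}(z))$ when $\lambda=0$). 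By the gauge characterisation recalled before the proposition, $L$ is projectively symplectic (resp.\ symplectic).

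The only genuinely non-formal ingredient is the existence of a $\mathbb{K}(z)$-rational symplectic basis for $W$ in the converse; this is a standard linear-algebra fact valid over any field in which $2$ is invertible, and so in particular over $\mathbb{K}(z)$. Everything else is a bookkeeping identity built from $(P^{-1})'=-P^{-1}P'P^{-1}$, where the point to watch is the pairwise cancellation of the cross-terms involving $P'$.
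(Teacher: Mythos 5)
Your proof is correct, but it takes a genuinely different route from the paper's. You work entirely from the gauge-theoretic characterisation (the fourth Definition, via Kolchin--Kovacic): in the forward direction you pull back $J$ by the gauge matrix $P$ that puts the system into $\mathfrak{psp}_{2n}(\mathbb{K}(z))$ and check by direct computation that $W=P^{-t}JP^{-1}$ satisfies \eqref{eqsymp}; in the converse you invoke the rational symplectic basis theorem to write $W=P^{-t}JP^{-1}$ for some $P\in GL_{2n}(\mathbb{K}(z))$ and reverse the computation. (You have silently corrected the paper's gauge formula to the standard $B=P^{-1}AP-P^{-1}P'$, which is the right reading.) The paper instead argues directly from the Galois-group definition: it forms $\tilde{W}=(X^{-1})^tJX^{-1}$ from a fundamental matrix $X$, observes that $\tilde{W}$ is semi-invariant under the Galois action and hence a hyperexponential solution of the exterior square of the dual system, and normalises by $e^{-\int\lambda}$ to land in $\mathbb{K}(z)$; the converse is the observation that $X^t\tilde{W}X$ is a constant nondegenerate alternating form preserved up to scalar by the Galois group. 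Your argument is more elementary and more constructive in the converse (pure linear algebra over $\mathbb{K}(z)$, no Picard--Vessiot theory), but it leans on the equivalence of the Galois-group and gauge definitions, which the paper asserts via Kolchin--Kovacic and which needs the connectedness of $SP_{2n}$ and $PSP_{2n}$. The paper's route has the advantage of producing, as a byproduct, the identification of symplectic structures with hyperexponential solutions of the exterior square of the dual system --- precisely the fact exploited in the ensuing Corollary and in the \underline{\sf IsSymplectic} algorithm --- which your proof does not deliver.
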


\begin{proof}
Let $X=(x_{i,j})$ be a fundamental matrix of solutions of $L$. Let $V$ be the vector space over $\mathbb{K}$ generated by the columns of $X$. Let $G\subseteq GL_{2n}(\mathbb{K})$ be the representation of the Galois group of $L$ associated to $X$.

Assume first that $L$ is projectively symplectic. Since $L$ is projectively symplectic, we may choose $X$ so that $G\subseteq PSP_{2n}(\mathbb{K})$. Then $\tilde{W}=(X^{-1})^tJX^{-1}$ is an invertible antisymmetric matrix with semi-invariant coefficients. Indeed, for any $g\in G$, $(X^{-1})^t(g^{-1})^tJg^{-1}X^{-1}=(X^{-1})^t\lambda_gJX^{-1}=\lambda_g\tilde{W}$ for some $\lambda_g\in\mathbb{K}$. On the other hand $X^t\tilde{W}X=J$, therefore
\[
0=A^t\tilde{W}+\tilde{W}'+\tilde{W}A.
\]
The invertible antisymmetric matrix with semi-invariant coefficients $\tilde{W}$ defines a non-degenerate alternating bilinear forms on $V$, so the coefficients of $\tilde{W}$ form a hyperexponential solution to the exterior square of the dual of the system $X'=AX$ (for, if $\tilde{W}=(\tilde{w}_{i,j})$ and $J=(\jmath_{i,j})$, $$\jmath_{i,j}=\sum_{k,l=1}^{2n} x_{k,i}\tilde{w}_{k,l}x_{l,j}=\sum_{1=k<l=2n} \tilde{w}_{k,l}(x_{k,i}x_{l,j}-x_{k,j}x_{l,i}).)$$
So there exist $\lambda\in\mathbb{K}(z)$ such that $W=e^{-\int \lambda(z)dz}\tilde{W}\in\mathbb{K}(z)$ and
\[
0=A^tW+W'+\lambda W+WA.
\]

Conversely, assume that there exist an invertible antisymmetric matrix $W$ with coefficients in $\mathbb{K}(z)$ 
and a $\lambda\in\mathbb{K}(z)$ such that
\[
A^tW+WA+W'+\lambda W=0.
\]
Then, if $\tilde{W}=e^{\int \lambda(z)dz}W$, $X^t\tilde{W}X$ is a matrix with constant coefficients. In particular, for every $g\in G$ we have $g^tX^t\tilde{W}Xg=X^t\tilde{W}X$, so $G$ is isomorphic to a subgroup of $PSP_{2n}(\mathbb{K})$.

The non-projective case correspond to the case $\lambda=0$.
\end{proof}

\begin{cor}
Let $L$ be a $2n$-th order operator with coefficients in $\mathbb{K}(z)$. If the operator $L$ is symplectic (respectively projectively symplectic) then the exterior square of $L$ has a rational (respectively hyperexponential) solution.
\end{cor}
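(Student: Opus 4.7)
The plan is to extract a (hyper)exponential scalar solution of the exterior square operator $L^{\wedge 2}$ directly out of the symplectic structure matrix $W$ furnished by Proposition~\ref{propsymp}. First, I would invoke that proposition to obtain an invertible antisymmetric matrix $W\in M_{2n}(\mathbb{K}(z))$ and $\lambda\in\mathbb{K}(z)$ satisfying $A^tW+WA+W'+\lambda W=0$, with $\lambda=0$ in the symplectic case. The proof of Proposition~\ref{propsymp} already exhibits the entries of $\tilde W = e^{\int\lambda}W$ as a hyperexponential solution of the exterior square of the dual system; the remaining task is to transfer this to the exterior square of $L$ itself.

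The second step is a short calculation with the inverse $W^{-1}$, which is also rational and antisymmetric. Differentiating $WW^{-1}=I$ and substituting the expression for $W'$ from the equation above yields $(W^{-1})' = AW^{-1} + W^{-1}A^t + \lambda W^{-1}$. Setting $U := e^{-\int\lambda}\, W^{-1}$, this becomes $U' = AU + UA^t$, which is precisely the matrix form of the differential system satisfied by elements of $\Lambda^2 V$, where $V$ is the solution space of $L$. Hence $U$ is an antisymmetric, hyperexponential matrix-valued solution (rational if $\lambda=0$) of the exterior square of the companion system $X'=AX$. Its $(1,2)$-entry $U_{1,2}$ is then, by the standard identification between the exterior square system and the scalar exterior square operator via the cyclic vector $e_1\wedge e_2$, a linear combination of Wronskians $y_iy_j'-y_jy_i'$ of pairs of fundamental solutions of $L$, hence a solution of $L^{\wedge 2}$. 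Since all entries of $U$ are hyperexponential, and rational when $\lambda=0$, this yields the claimed solution.

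The main technical obstacle I anticipate is guaranteeing that the extracted scalar $U_{1,2}$ is nonzero, since invertibility of the antisymmetric matrix $U$ does not by itself force any particular entry to be nonzero. In the generic case where $L^{\wedge 2}$ has maximal order $\binom{2n}{2}$ this is automatic; in degenerate configurations I would instead invoke the general fact that any $G$-semi-invariant line in $\Lambda^2 V$ corresponds to a nonzero (hyper)exponential element of the solution space of $L^{\wedge 2}$ via the chosen cyclic vector, or alternatively perform a rational gauge transformation on the companion system to move a nonvanishing entry of $U$ into the $(1,2)$ position.
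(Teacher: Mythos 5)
Your proposal is correct and follows essentially the same route as the paper: the paper's proof also passes from the invertible rational (resp.\ hyperexponential) solution $W$ of the dual exterior-square equation to a solution of the exterior square of $L$ itself by dualizing, i.e.\ inverting $W$, which is exactly your computation $(W^{-1})'=AW^{-1}+W^{-1}A^t+\lambda W^{-1}$ and the normalization $U=e^{-\int\lambda}W^{-1}$. Your closing remarks about extracting a nonzero scalar entry via the cyclic vector address a degeneracy the paper leaves implicit (it treats the exterior square at the level of the matrix system, where $U\neq 0$ suffices), but the core argument is the same.
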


\begin{proof}
In the proof of proposition \ref{propsymp}, we showed that if $A$ is the companion matrix of $L$, the equation on $\tilde{W}$, $A^t\tilde{W}+\tilde{W}A+\tilde{W}'=0$, is equivalent to the exterior square of the dual of $X'=AX$. If the system is symplectic (respectively projectively symplectic), the equation $A^tW+WA+W'=0$ admits a rational (respectively hyperexponential) invertible matrix solution, and thus so its dual, which is equivalent to the exterior square of $L$.
\end{proof}

A function is hyperexponential over $\mathbb{K}(z)$ if its logarithmic derivative is in $\mathbb{K}(z)$. The exponential type of $f$ is the equivalent class of $f$ for the equivalence relation
$$f\sim g \Leftrightarrow f/g \in\mathbb{K}(z).$$

\begin{defi}
A non-trivial solution of $A^tW+WA+W'=0$ which is
\begin{itemize}
\item hyperexponential is called a projective Poisson structure.
\item hyperexponential invertible is called a projective symplectic structure.
\item rational is called a Poisson structure.
\item rational invertible is called a symplectic structure.
\end{itemize}
\end{defi}

Note that if $W,\lambda$ is a solution in the projective case, then
$$e^{-\int \lambda(z) dz}W(z)$$
is a solution in the non-projective case whenever $\exp{\int \lambda(z) dz}$ $\not\in\mathbb{K}(z)$. In particular we can detect whether the Galois group is a finite extension of $SP_{2n}(\mathbb{K})$.

\begin{prop}
Let $L$ be a $2n$-th order operator with coefficients in $\mathbb{K}(z)$ and $A$ its companion matrix. The Galois group of $L$ is isomorphic to a subgroup of $\mathbb{Z}_p \ltimes SP_{2n}(\mathbb{K})$ if and only if it is projectively symplectic with $\lambda\in\mathbb{K}(z)$ as in Proposition \ref{propsymp} such that
$$e^{p\int \lambda(z) dz} \in\mathbb{K}(z).$$
\end{prop}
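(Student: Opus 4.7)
The plan is to build on Proposition \ref{propsymp} and reinterpret the condition as a statement about how the Galois group $G$ acts on a single hyperexponential function. From the projective--symplectic data $(W,\lambda)$ of that proposition, set $u = \exp\!\int \lambda(z)\,dz$ and $\tilde W = uW$. The computation already carried out there shows that $C := X^t \tilde W X$ is a constant, invertible, antisymmetric matrix, and that for each $g\in G$ we have $g^t C g = \lambda_g C$ for some $\lambda_g \in\mathbb{K}^*$. Interpreting $\mathbb{Z}_p \ltimes SP_{2n}(\mathbb{K})$ as the preimage of the group $\mu_p$ of $p$-th roots of unity under the multiplier homomorphism $PSP_{2n}(\mathbb{K})\to\mathbb{K}^*$, $g\mapsto \lambda_g$ (whose kernel is exactly $SP_{2n}(\mathbb{K})$), the proposition reduces to the assertion that $\lambda_g \in \mu_p$ for all $g\in G$ iff $u^p\in\mathbb{K}(z)$.

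The key link is the identification of the multiplier character with the inverse of the character $\chi: G \to \mathbb{K}^*$ by which $G$ acts on $u$. Since $u'/u = \lambda \in \mathbb{K}(z)$, such a $\chi$ is defined by $g(u) = \chi(g)\,u$. Because $W$ has rational coefficients, $g(\tilde W) = \chi(g)\,\tilde W$; applying $g$ to $X^t\tilde W X = C$ and using $g\cdot X = Xg$ yields $C = \chi(g)\, g^t C g$, hence $\lambda_g = \chi(g)^{-1}$. With this in hand, $\lambda_g^p = 1$ for every $g\in G$ is equivalent to $\chi(g)^p = 1$ for every $g\in G$, which is equivalent to $u^p$ being $G$-invariant, which by the Galois correspondence for Picard--Vessiot extensions means exactly $u^p = e^{p\int\lambda(z)\,dz} \in\mathbb{K}(z)$.

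I do not foresee a genuine obstacle: the whole argument is a dictionary between $p$-torsion of a character of $G$ and rationality of the $p$-th power of the associated hyperexponential function. The only caveat worth addressing is that the pair $(W,\lambda)$ is not uniquely determined by $L$; replacing $W$ by $fW$ with $f\in\mathbb{K}(z)^*$ shifts $\lambda$ by $-f'/f$ and $u$ by $u/f$, so the condition $u^p\in\mathbb{K}(z)$ is invariant under this ambiguity. This is what makes the statement of the proposition well-posed independently of which projective symplectic structure is picked, and it is the only bookkeeping point one needs to verify before the two-way translation above goes through cleanly.
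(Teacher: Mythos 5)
Your proposal is correct and follows essentially the same route as the paper's proof: both identify the multiplier of $g$ on the constant symplectic form with (the inverse of) the character by which $G$ acts on $e^{\int\lambda(z)dz}$, and translate $p$-torsion of that character into rationality of $e^{p\int\lambda(z)dz}$ via the Galois correspondence. Your extra remarks on the sign/inverse of the multiplier and on the independence from the choice of $(W,\lambda)$ are minor refinements of the same argument, not a different method.
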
 

\begin{proof}
Let $L$ be projectively symplectic. Let $X$ and $\tilde{W}=\exp{\int \lambda(z) dz} W=(X^{-1})^tJX^{-1}$ be defined as in the proof of Proposition \ref{propsymp}. Since $\tilde{W}$ is semi-invariant under the action of the Galois group $G$, for every $\sigma\in G$ there exist $\lambda_{\sigma}\in\mathbb{K}$ such that $\sigma(\exp{\int \lambda(z) dz})=\lambda_{\sigma}\exp{\int \lambda(z) dz}$. So $\lambda\in\mathbb{K}(z)$ such that $\exp{p\int \lambda(z) dz} \in\mathbb{K}(z)$ is equivalent to $\exp{(p\int \lambda(z) dz)}=\sigma(\exp{(p\int \lambda(z) dz)})=\lambda^p_{\sigma} \exp{(p\int \lambda(z) dz)}$, or equivalently $\lambda_{\sigma}\in \mathbb{U}_p$. Now, if $g$ is the image of $\sigma$ in the representation associated to $X$ of $G$ in $GL_{2n}(\mathbb{K})$, we have $g^tJg=\lambda_\sigma J$. Therefore $\exp{(p\int \lambda(z) dz)} \in\mathbb{K}(z)$ is equivalent to $g\in \mathbb{Z}_p \ltimes SP_{2n}(\mathbb{K})$. 
\end{proof}

A symplectic structure is an antisymmetric matrix of dimension $2n$, and thus we can compute its Pfaffian. 

\begin{defi}
Let $W=(w_{i,j})$ be an antisymmetric matrix of dimension $2n$. The Pfaffian of $W$ is defined by
$$\hbox{Pf}(W)=\frac{1}{2^nn!}\sum_{\sigma\in S_{2n}}\hbox{sgn}(\sigma)\prod_{i=1}^n w_{\sigma(2i-1),\sigma(2i)}.$$
\end{defi}

Recall that the square of the Pfaffian is the determinant. From the relation $X^t \tilde{W} X =J$ in the proof of Proposition \ref{propsymp} we get $\hbox{Pf}(X^t \tilde{W} X) =\hbox{Pf}(J)=1$ and thus $\hbox{Pf}(\tilde{W})=\hbox{det}(X)^{-1}$. Hence
$$\hbox{Pf}\left(e^{-\int \lambda(z) dz} W\right)=\hbox{det}(X)^{-1}$$
and
$$e^{n\int \lambda(z) dz} \hbox{Pf}\left(W^{-1}\right)=\hbox{det}(X).$$
As $\hbox{Pf}(W^{-1}) \in\mathbb{K}(z)$, we can deduce the exponential type of $\exp{n\int \lambda(z) dz}$, but not  the exponential type of $\exp{\int \lambda(z) dz}$, as possibly $n$-th roots could appear. In fact
$$\{M\in GL_{2n}(\mathbb{K}), \; \exists \lambda\in\mathbb{U}_n, \; M^t J M=\lambda J\} \subset SL_{2n}(\mathbb{K}).$$
So in particular, even if the Wronskian of the system is a constant and the system is projectively symplectic, the Galois group is either symplectic or a subgroup of $\mathbb{Z}_n \ltimes SP_{2n}(\mathbb{K})$.

Our algorithm to detect symplecticity has two parts. The first part searches for solutions to the exterior square of the dual system. The second part identifies the solutions that define a non-degenerate antisymmetric form and their exponential type.\\

\noindent\underline{\sf IsSymplectic}\\
\textsf{Input:} A differential operator $L$ of order $2n$ with coefficients in $\mathbb{K}(z)$.\\
\textsf{Output:} A projective symplectic structure if it exists.\\
\begin{enumerate}
\item Noting $W$ the matrix
$$\left(\begin{array}{ccccc} 0& J_1(z) & J_2(z) & \cdots & J_{2n-1}(z)\\-J_1(z)&0& J_{2n}(z)& \cdots &  J_{4n-3}(z)\\ -J_2(z) & -J_{2n-1}(z) & 0 & \cdots & J_{6n-6}(z)\\ \vdots & \vdots & \vdots & \ddots & \vdots \\ -J_{2n-1}(z) & -J_{4n-3}(z) & -J_{6n-6}(z) & \cdots & 0 \\ \end{array} \right)$$
and $A$ the companion matrix of $L$, we write down the system
\begin{equation}\label{eqsympext}
A^tW+WA+W'=0.
\end{equation}
\item Compute a basis $B=\{\tilde{W}_1,\ldots,\tilde{W}_m\}$ of the hyperexponential solutions of (\ref{eqsympext}), which is equivalent to the exterior square of the dual of $X'=AX$.
\item For each exponential type of solution in $B$, look for linear combinations of the $\tilde{W}_i$'s with the same exponential type, $\tilde{W}_{i_1},\ldots,\tilde{W}_{i_p}$, such that $\hbox{Pf}(a_1\tilde{W}_{i_1}+\ldots+a_p\tilde{W}_{i_p})\neq 0$. If there are none, return $[]$. Else set $\tilde{W}=a_1\tilde{W}_{i_1}+\ldots+a_p\tilde{W}_{i_p}$.
\item Let $\exp(\int \lambda(z) dz)$ be the exponential type of $\tilde{W}$. Set $W=\exp(-\int \lambda(z) dz)\tilde{W}$
\item Return $\exp(\int \lambda(z) dz)W(z)$.
\end{enumerate}

\begin{thm}
The algorithm \underline{\sf IsSymplectic} returns a solution if and only if $L$ is projectively symplectic.
\end{thm}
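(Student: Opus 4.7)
The plan is to prove the two implications separately. The easy direction ($\Leftarrow$) should follow immediately from Proposition~\ref{propsymp}: whenever the algorithm reaches step~5, the matrix $\tilde{W}$ assembled in step~3 is, by construction, a hyperexponential solution of (\ref{eqsympext}) with nonzero Pfaffian, hence invertible, antisymmetric, and of some exponential type $\exp(\int\lambda\,dz)$. I would then set $W=\exp(-\int\lambda\,dz)\tilde{W}$, observe that $W$ is rational, invertible, antisymmetric and satisfies $A^tW+WA+W'+\lambda W=0$, and conclude projective symplecticity by the converse half of Proposition~\ref{propsymp}.

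For the converse, I would begin with a projectively symplectic $L$, invoke Proposition~\ref{propsymp} to produce an invertible antisymmetric $W\in\mathbb{K}(z)^{2n\times 2n}$ and $\lambda\in\mathbb{K}(z)$ satisfying~(\ref{eqsymp}), and note that $\tilde{W}=\exp(\int\lambda\,dz)W$ is an invertible antisymmetric hyperexponential solution of (\ref{eqsympext}). Since $B$ is a complete basis of the hyperexponential solutions and since hyperexponential functions of distinct exponential types are linearly independent, $\tilde{W}$ must lie in the $\mathbb{K}$-span of the subset $\tilde{W}_{i_1},\ldots,\tilde{W}_{i_p}$ of $B$ sharing its exponential type $\exp(\int\lambda\,dz)$.

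The hard part will be step~3: showing that whenever such a witness $\tilde{W}$ exists, an admissible tuple of coefficients $(a_1,\ldots,a_p)$ can be detected effectively. I plan to write $\tilde{W}_{i_j}=R_{i_j}\exp(\int\lambda\,dz)$ with each $R_{i_j}$ a rational matrix, so that
$$
\hbox{Pf}\bigl(a_1\tilde{W}_{i_1}+\cdots+a_p\tilde{W}_{i_p}\bigr)=\exp\bigl(n{\textstyle\int}\lambda\,dz\bigr)\,P(a_1,\ldots,a_p),
$$
where $P$ is the Pfaffian of $a_1 R_{i_1}+\cdots+a_p R_{i_p}$, a homogeneous polynomial of degree $n$ in the $a_j$'s with coefficients in $\mathbb{K}(z)$. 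The decomposition of $\tilde{W}$ from the previous paragraph exhibits at least one tuple at which $P$ is nonzero, hence $P\not\equiv 0$, and step~3 locates an admissible tuple by computing $P$ symbolically and specializing outside its zero locus. Once step~3 succeeds, steps~4 and~5 merely factor out the exponential type, so the algorithm returns $\tilde{W}$ as required.
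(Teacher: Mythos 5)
Your proposal is correct and follows essentially the same route as the paper's proof: the output of the algorithm is by construction a projective symplectic structure (so Proposition~\ref{propsymp} gives one direction), and conversely any projective symplectic structure is a hyperexponential solution of (\ref{eqsympext}), hence a same-type linear combination of the basis elements with nonzero Pfaffian, which step~3 detects. Your added detail on the effectivity of step~3 (the Pfaffian as a nonvanishing homogeneous polynomial in the $a_j$'s, specialized outside its zero locus) is a welcome elaboration of what the paper leaves implicit, but it is not a different argument.
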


\begin{proof}
The solutions $W_i$ form a basis of the projective Poisson structures (step 1). A projective symplectic structure is a hyperexponential solution of \eqref{eqsympext}, and thus is a linear combination of the $W_i$. As it should be hyperexponential, it can only be a linear combination of $W_i$ of the same type and the invertible condition is equivalent to a non zero Pfaffian (step 2). The symplectic structure and its type is returned (steps 3,4).
\end{proof}

\section{A Kovacic algorithm}

From now on, we will restrict ourselves to order $4$ operators. Let us first classify the possible Lie groups we will encounter.

\subsection{Symplectic groups}

\begin{defi}
A Lie subgroup $G$ of $SL_4(\mathbb{K})$ is said to be intransitive if it admits a block triangular representation, i.e. if it stabilizes a non-trivial proper subspace of $\mathbb{K}^4$, otherwise it is called transitive. It is called imprimitive if there exists a decomposition $\mathbb{K}^4=\oplus_{i=1}^p V_i$, where each $V_i$ is a non-trivial proper subspace, such that $G$ acts permuting the $V_i$'s, i.e.
$$\forall g\in G, \exists \sigma \in S_p \textrm{ s.t. } g(V_i) \subset V_{\sigma(i)}, \forall  i=1\dots p.  $$
Otherwise it is called primitive. When $G$ is imprimitive and $p=4$, $G$ will be called monomial imprimitive.
\end{defi}

\begin{prop}\label{propgroups}
A Lie subgroup of $SP_4(\mathbb{K})$ is up to conjugacy generated by elements of the form
\begin{itemize}
\item[i)] Upper block triangular matrices with diagonal blocks of size at most $2\times 2$.
\item[ii)] $2\times 2$ block diagonal matrices and anti-diagonal matrices.
\item[iii)] The group $SP_4(\mathbb{K})$.
\end{itemize}
\end{prop}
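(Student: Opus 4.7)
My plan is to invoke the standard trichotomy for Lie subgroups of $GL_4(\mathbb{K})$: $G\subseteq SP_4(\mathbb{K})$ is either intransitive, transitive imprimitive, or primitive. I will show that the symplectic structure forces the first two cases into the explicit shapes (i) and (ii), while the primitive case trivially falls under (iii) (any subgroup of $SP_4(\mathbb{K})$ is generated by elements of $SP_4(\mathbb{K})$).

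In the intransitive case, suppose $G$ stabilises a non-trivial proper $V\subset\mathbb{K}^4$. Since $G$ preserves the symplectic form, the orthogonal $V^\perp$ is also $G$-stable, and I case-split on $\dim V$. If $\dim V=1$, then $V$ is isotropic, $V\subset V^\perp$, and the flag $V\subset V^\perp\subset\mathbb{K}^4$ has successive quotients of dimensions $1,2,1$. If $\dim V=2$ and $V$ is non-degenerate, then $\mathbb{K}^4=V\oplus V^\perp$ is block diagonal with two $2\times 2$ blocks; if $V$ is Lagrangian then $V=V^\perp$ yields an upper block triangular flag of block sizes $2,2$; otherwise $V\cap V^\perp$ is a $G$-stable line, reducing to the previous case. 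The case $\dim V=3$ is dual to $\dim V=1$. In every instance $G$ is conjugate into upper block triangular matrices with $2\times 2$ blocks, giving case (i).

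For the transitive imprimitive case, let $\mathbb{K}^4=\oplus_{i=1}^p V_i$ be a $G$-stable decomposition. The only possibilities are $p=2$ with $\dim V_i=2$, which directly gives case (ii), or $p=4$ with $\dim V_i=1$ (monomial imprimitive). In the monomial sub-case, I need to refine to a $G$-stable pair of $2$-planes. Writing a monomial element as $g=D\,P_\sigma$ with permutation image $\sigma\in\pi(G)\subseteq S_4$, the symplectic condition $g^t J g=J$ translates into the relations $s_{\sigma(i)}s_{\sigma(j)}\,\omega(L_{\sigma(i)},L_{\sigma(j)})=\omega(L_i,L_j)$. In particular $\sigma$ must preserve the "non-orthogonality pattern" of $\omega$ on $\{L_1,\dots,L_4\}$. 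After rescaling the $L_i$ to bring $\omega$ into a form whose only non-zero pairings are on $\{L_1,L_3\}$ and $\{L_2,L_4\}$, a direct check of the compatibility equations rules out every $3$-cycle in $\pi(G)$, so $A_4\not\subseteq\pi(G)$ and $\pi(G)$ sits inside the dihedral stabiliser $D_4$ of the pairing. Grouping the lines according to this pairing produces a $G$-stable $2+2$ decomposition, and in an adapted basis $G$ is generated by $2\times 2$ block diagonal and block anti-diagonal matrices.

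The primitive case is tautologous (case (iii) imposes no further structural constraint). The main obstacle is the monomial sub-case above, where the rigidity statement $A_4\not\subseteq\pi(G)$ has to be extracted from the scalar compatibility equations and the non-degeneracy of the Pfaffian; the intransitive and $p=2$ imprimitive cases amount to bookkeeping with symplectic orthogonals.
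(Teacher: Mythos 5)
Your handling of the intransitive case and of the non-monomial imprimitive case is sound and essentially matches the paper's (bookkeeping with symplectic orthogonals for $\dim V=1,2,3$, and the direct $2+2$ swap). But there are two genuine gaps, and they sit exactly where the content of the proposition lies.

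First, the primitive case is not tautologous. The proposition is a trichotomy: every Lie subgroup of $SP_4(\mathbb{K})$ is conjugate into the block upper triangular group, or into the block diagonal/anti-diagonal group, or is \emph{all} of $SP_4(\mathbb{K})$. Under your reading of iii) ("generated by elements of $SP_4$") the statement is vacuous and useless for the later deduction that an irreducible solvable symplectic $L$ has Galois group $\mathbb{Z}_2\ltimes G_1$ with $G_1\subset SL_2(\mathbb{K})$. What actually must be shown is that no \emph{proper} primitive Lie subgroup of $SL_4(\mathbb{K})$ preserves a symplectic form. The paper does this by checking the $30$ finite primitive groups of \cite{HANANY2001}, the infinite primitive groups with irreducible identity component ($SL_4$, $SO_4$, $SL_2$ in its third symmetric power, of which only $SP_4$ survives), and the infinite primitive groups with reducible identity component $\{\mathrm{diag}(A,A)\}$ extended by $I_2\otimes T$, where an explicit computation shows compatibility with the symplectic form forces $T$ to generate only cyclic or dihedral groups, excluding $A_4$, $S_4$, $A_5$. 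None of this appears in your proposal.

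Second, your monomial argument fails. Rescaling the lines $L_i$ multiplies $\omega(L_i,L_j)$ by nonzero scalars and therefore cannot annihilate a nonzero pairing, so $\omega$ cannot in general be normalized to pair only $\{L_1,L_3\}$ and $\{L_2,L_4\}$. Moreover the conclusion $A_4\not\subseteq\pi(G)$ is false: the paper exhibits an explicit monomial representation of $A_4$ inside $SP_4(\mathbb{K})$ whose permutation part contains $3$-cycles and whose invariant form $J$ has every off-diagonal entry nonzero. Since $A_4$ and $S_4$ act primitively on the four lines, no $2+2$ block system of coordinate lines exists, and the paper instead finds a non-coordinate invariant $2$-plane $V$ (spanned by explicit vectors $v_1,v_2$) with $V\oplus\bar V=\mathbb{K}^4$, placing the $A_4$ case in case i) rather than ii); $S_4$ is then treated as a degree-$2$ extension giving case ii). Your pairing argument does work for $\mathbb{Z}_4$, the Klein group and $D_8$, but there it is the permutation combinatorics, not the symplectic form, that supplies the block system.
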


\begin{proof}
We recall that a group $G\subseteq SL_4(\mathbb{K})$ is conjugate to a subgroup of the symplectic group if we can find an antisymmetric invertible matrix $J_0\in GL_4(\mathbb{K})$ such that
$$g^tJ_0g=J_0,\quad \forall g\in G.$$
We will prove the proposition following the classification of Lie subgroups of $SL_4(\mathbb{K})$ found in \cite{HARTMANN2002}, by first examining the transitive primitive groups, then the transitive imprimitive non-monomial followed by the transitive imprimitive monomial and finally the intransitive.

Let us begin by looking for transitive primitive groups. We just have to test the above condition on a generating set of the group. Thus the condition becomes a set of linear equations and the entries of $J_0$ and an inequality condition $\hbox{Pf}(J_0)\neq 0$.

We begin with the $30$ finite primitive groups which are listed in \cite{HANANY2001}. We find that none of them is in $SP_4(\mathbb{K})$. 

Now for infinite primitive groups, there are four possibilities with $G^0$ irreducible, namely $G^0\simeq SL_4(\mathbb{K}),SP_4(\mathbb{K}),SO_4(\mathbb{K})$ and $SL_2(\mathbb{K})$ in its third symmetric power representation, and the only one satisfying the condition is $SP_4(\mathbb{K})$. We obtain case iii) of the proposition.

The possibilities for infinite primitive groups with $G^0$ reducible are the groups $G$ with
$$G^0=\left\lbrace \left( \begin{array}{cc} A & 0\\ 0 & A \end{array}\right), A\in SL_2(\mathbb{K}) \right\rbrace$$
and $G=HG^0$ where $H=I_2 \otimes T$ and $T$ is a finite subgroup of $SL_2(\mathbb{K})$. The symplectic structures compatible with such $G^0$ are of the form
$$\left(\begin{array}{cccc} 0& u_1 & 0& -u_4\\ -u_1& 0 & u_4 & 0 \\ 0& -u_4 & 0 & u_6\\ u_4 & 0 & -u_6 & 0 \end{array} \right)$$
Now the possible groups for $H$ are diagonal extensions of order at most $2$ of the primitives groups $A_4,S_4,A_5$. Let us search for $2\times 2$ matrices $T$ of determinant $1$ leading to a matrix $H$ compatible with one of the above symplectic structures. If $\hbox{det}(T)=1$ and $T$ is not triangular, we find
$$T=\left( \begin{array}{cc}
\frac{qt^2-t^2w+q+w}{2qt} & -\frac{(q^2-w^2)(t^2-1)}{4qt}\\ -\frac{t^2-1}{qt} & \frac{qt^2+t^2w+q-w}{2qt} \end{array}\right) $$
where $q,w$ are fixed constants and $t\in\mathbb{K}$ is free. These matrices generate either a finite cyclic group if $t\in\mathbb{U}_n$, a group isomorphic to $\mathbb{K}^*$, or when adding a matrix $T$ with $\hbox{det}(T)=-1$ a degree two extension of these two. So the only finite groups generated by matrices of type $T$ compatible with the symplectic structure are either cyclic or finite dihedral and thus the groups $A_4,S_4,A_5$ and their diagonal extensions are not possible. Thus this case cannot happen.

Now we consider the transitive imprimitive  non-monomial groups. They exchange two $2$-dimensional vector spaces $V_1$, $V_2$. On an adapted basis, the group is then generated by $2\times 2$ block diagonal matrices and anti-diagonal matrices
$$\left(\begin{array}{cccc} \star & \star &0 &0\\ \star & \star &0 &0 \\ 0&0 & \star&\star\\ 0&0&\star &\star\\ \end{array}\right),\left(\begin{array}{cccc} 0 & 0 &\star &\star\\ 0 & 0 &\star &\star \\ \star & \star & 0&0\\ \star & \star &0&0\\ \end{array}\right).$$
We obtain case ii) of the proposition.

For transitive imprimitive monomial groups, we need to look for matrices of the form $\hbox{diag}(a,b,c,d)P$ where $P$ is a permutation matrix of $S_4$. The permutation group acting on four $1$-dimensional vector spaces should be transitive, and thus can be one of the following groups:
\begin{itemize}
\item[a)] $\mathbb{Z}_4\simeq\langle(1,2,3,4)\rangle$,
\item[b)] the Klein group in it standard representation (i.e. generated by $(1,2)(3,4),(1,3)(2,4)$),
\item[c)] the dihedral group $D_8\simeq\langle (1,2,3,4),(1,3) \rangle$,
\item[d)] $A_4=\langle (1,2,3),(1,2)(3,4) \rangle$, and,
\item[e)] $S_4$.
\end{itemize}
For the Klein group, the group then admits a representation by $2\times 2$ block diagonal and anti-diagonal matrices. For the groups $\mathbb{Z}_4$ and $D_8$ we can consider the decomposition
$$\{(1,0,0,0),(0,0,1,0)\} \oplus \{(0,1,0,0),(0,0,0,1)\}$$
on which the group either permutes or stabilizes the two $2$-dimensional spaces. For these three groups we obtain case ii) of the proposition.

The representation of $A_4$
$$\left\langle \left(\begin{array}{cccc} 0&1&0&0\\ 0&0&1&0\\1&0&0&0\\ 0&0&0&1\end{array}\right), \left(\begin{array}{cccc} 0&1&0&0\\ -1&0&0&0\\0&0&0&1\\ 0&0&-1&0\end{array}\right) \right\rangle $$
is compatible with the symplectic structure
$$J=\left(\begin{array}{cccc} 0&1&-1&-1\\ -1&0&1&-1\\1&-1&0&-1\\ 1&1&1&0\end{array}\right).$$
The subspace $V$ is spanned by $v_1$ and $v_2$ where
$$v_1=(-\sqrt{3}-i,i\sqrt{3}-1,-2-\sqrt{3}+i,1+2i+i\sqrt{3})^t$$
and
$$v_2=(i+i\sqrt{3},-1-\sqrt{3},1-i,1+i)^t$$
is stable under this representation. The complex conjugate of $V$, $\bar{V}$, is also stable by $G$ and $V\oplus \bar{V}=\mathbb{K}^4$. Therefore this group is block diagonalizable with $2\times 2$ blocks. We obtain case i) of the proposition.

The representation of $S_4$
$$G=\left\langle \left(\begin{array}{cccc} 0&-i&0&0\\0&0&-i&0\\0&0&0&-i\\i&0&0&0\\\end{array}\right), 
\left(\begin{array}{cccc} 0&-i&0&0\\-i&0&0&0\\0&0&-i&0\\0&0&0&i\\\end{array}\right) \right\rangle .$$
is a degree $2$ extension of the previous case, and thus after conjugacy, the group is generated by $2\times 2$ block diagonal matrices and anti-diagonal matrices. We obtain case ii) of the proposition.

Now we treat the case where $G$ is intransitive. We assume the group is symplectic. Up to conjugacy we may assume that $G$ is such that $g^tJg=J$ for every $g\in G$, where $J$ is the standard symplectic form. Since $G$ is intransitive, the group has a stable subspace $V$ of dimension either $1$, $2$ or $3$ where the group acts transitively. Assume first the case $\dim(V)=3$ and let $V^J$ be the symplectic orthogonal complement of $V$. We have $\dim(V^J)=1$, and let $w\in V^J$, $w\ne 0$. Let $g\in G$, then for every $v\in V$, if we denote $v'=g^{-1}v\in V$, we have $$(gw)^tJv=w^tg^tJgv'=w^tJv'=0.$$
Therefore $V^J$ is stable by $G$. But since $\dim(V^J)=1$, $V^J\subseteq (V^J)^J=V$, which contradicts the assumption $G$ acts transitively on $V$. Similarly, in the case $\dim(V)=1$, $V^J$ is stable under $G$ and $V\subseteq V^J$, therefore we can represent $G$ by block triangular matrices with blocks in the diagonal of size at most $2\times 2$. We have case i) of the proposition. Finally, if $\dim(V)=2$, we also have case i).
\end{proof}

We deduce that if a symplectic $L$ is irreducible but has Liouvillian solutions (case ii) in Proposition \ref{propgroups}), then the Galois group is of the form $\mathbb{Z}_2 \ltimes G_1$ with $G_1$ subgroup of $SL_2(\mathbb{K})$. In particular, in that case $L$ admits a factorization in two operators of order $2$ with coefficients in a quadratic extension of $\mathbb{K}(z)$.

\subsection{The irreducible solvable case}

If $L$ is symplectic irreducible with Liouvillian solutions, we show that the factorization of $L$ in two operators of order $2$ with a quadratic extension can be identified using Poisson structures.

\begin{prop}\label{prop1}
If the Galois group of a symplectic operator $L$ can be represented as subgroup of $SP_4(\mathbb{K})$ formed by $2\times 2$ block diagonal and anti-diagonal matrices, then $L$ admits two linearly independent rank two Poisson structures with coefficients in a quadratic extension of $\mathbb{K}(z)$. 
\end{prop}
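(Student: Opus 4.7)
The plan is to exploit the quadratic field extension determined by the index-two block-diagonal subgroup inside $G$, and to build the two rank-$2$ Poisson structures from the natural symplectic forms on each factor of the resulting splitting of $L$.

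First I would let $H_0 \subset SP_4(\mathbb{K})$ be the group of block-diagonal symplectic matrices and set $G^0 = G \cap H_0$. Irreducibility of $L$ forces $G$ to contain an anti-diagonal element, so $[G : G^0] = 2$. By the Galois correspondence applied to the Picard--Vessiot extension, $G^0$ is the Galois group of $L$ over a quadratic extension $K = \mathbb{K}(z)(\sqrt{f})$ of $\mathbb{K}(z)$. Over $K$ the Lagrangian decomposition $V = V_1 \oplus V_2$ is $G^0$-invariant and $L$ becomes gauge-equivalent to a direct sum $L_1 \oplus L_2$ of two second-order operators acting on the $V_i$.

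The heart of the argument is then to show that $G^0$ acts on each $V_i$ through $SL_2(\mathbb{K})$, which is the content of the isomorphism $G \simeq \mathbb{Z}_2 \ltimes G_1$ with $G_1 \hookrightarrow SL_2(\mathbb{K})$ announced in the paragraph preceding the proposition. Concretely this amounts to proving that the $2\times 2$ Wronskians $W_1, W_2$ of the factors $L_1, L_2$ are rational over $K$. I would argue this via Hilbert~90: symplecticity implies $\det X \in \mathbb{K}^\times$ which, after normalizing the splitting gauge to have determinant one, yields $W_1 W_2 \in \mathbb{K}(z)$; moreover the anti-diagonal generator $h$ sends $W_1$ to a scalar multiple of $W_2$, so writing $\sigma$ for the generator of $\mathrm{Gal}(K/\mathbb{K}(z))$ one has $W_1 \cdot \sigma(W_1) \in \mathbb{K}(z)$, from which Hilbert~90 for the quadratic extension $K/\mathbb{K}(z)$ produces $W_1 \in K$.

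Once $G^0 \subset SL(V_1) \times SL(V_2)$ is secured, the natural symplectic forms $\epsilon$ on $V_1$ and $V_2$ are $G^0$-invariant. Extending each by zero to $V$, I form the two antisymmetric bilinear forms
$$ C_1 = \begin{pmatrix} \epsilon & 0 \\ 0 & 0 \end{pmatrix}, \qquad C_2 = \begin{pmatrix} 0 & 0 \\ 0 & \epsilon \end{pmatrix} $$
on $V$; both have rank~$2$, are linearly independent by their disjoint supports, and are $G^0$-invariant under $C \mapsto g^T C g$. Setting $W_i = (X^{-1})^T C_i X^{-1}$ translates these into two rank-$2$ rational antisymmetric solutions of $A^T W + WA + W' = 0$ over $K$, giving the two linearly independent rank-$2$ Poisson structures demanded by the statement. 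The main obstacle is the reduction to $G_1 \subset SL_2$: the rest is a direct construction, but securing the rationality of $W_1$ over $K$ requires careful bookkeeping of the splitting gauge together with the correct application of Hilbert~90 to balance the symplecticity constraint against the $\mathbb{Z}_2$-action exchanging $V_1$ and $V_2$.
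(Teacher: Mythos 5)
Your overall frame matches the paper's: pass to the index-two block-diagonal subgroup (your $G^0$, the paper's $G_1$), use the Galois correspondence to get the quadratic extension $K_1=K^{G_1}$ over which $V$ splits as $V_1\oplus V_2$, and manufacture the two Poisson structures from $G_1$-invariant rank-two antisymmetric forms supported on the $V_i$. The divergence is in which forms you take. The paper simply restricts the given rational symplectic structure: it takes the projections $P_1,P_2$ onto the $V_i$ (with coefficients in $K_1$) and sets $W_i=P_i^tWP_i$; this needs no discussion of Wronskians or determinants, only that the $V_i$ are \emph{symplectic} (non-isotropic) subspaces, so that $W$ restricts nondegenerately and each $W_i$ has rank two. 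You instead build the forms from the area form $\epsilon$ on each $V_i$, which obliges you to prove that $G_1$ acts on each $V_i$ through $SL_2$.

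That reduction is where your argument has a genuine gap. Writing $w_i$ for the Wronskian of the factor on $V_i$, an element $\tau\in G_1$ sends $w_1$ to $\det(\tau|_{V_1})\,w_1$, so $w_1\in K_1$ is \emph{equivalent} to the unimodularity you want. Your inputs are only that $w_1w_2$ is rational (note: symplecticity gives $\det X\in\mathbb{K}(z)^\times$, not $\mathbb{K}^\times$) and that the anti-diagonal generator exchanges $w_1$ and $w_2$; these yield $\det(\tau|_{V_1})\det(\tau|_{V_2})=1$ and nothing more. Hilbert~90 does not apply: it concerns norm-one elements \emph{of} the quadratic extension, whereas $w_1$ lives in the Picard--Vessiot field and its membership in $K_1$ is precisely what is at issue; the relation $w_1\cdot\sigma(w_1)\in\mathbb{K}(z)$ does not force $w_1\in K_1$. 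Concretely, $\mathrm{diag}(A,(A^t)^{-1})$ preserves the standard $J$ for every $A\in GL_2(\mathbb{K})$, so a block-diagonal symplectic element need not be unimodular on each block: unimodularity holds exactly when the invariant symplectic form restricts nondegenerately to the $V_i$, and fails when the $V_i$ are Lagrangian (the blocks are then contragredient and $\det(\tau|_{V_1})$ is unconstrained). So you must either establish that the $V_i$ are symplectic subspaces of $(V,W)$ --- the hypothesis the paper's construction actually rests on, which then hands you the rank-two invariant forms directly as $P_i^tWP_i$ with no appeal to Hilbert~90 --- or find another route to unimodularity; the norm computation you propose cannot deliver it.
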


\begin{proof}
The Galois group of $L$ is isomorphic to $\mathbb{Z}_2 \ltimes G_1$ where $G_1$ is a subgroup of $SL_{2}(\mathbb{K})$. Let $X=(x_{i,j})$ be a fundamental matrix of solutions of $L$. Let $V$ be the vector space over $\mathbb{K}$ generated by the columns of $X$. Since $L$ is symplectic there exits an invertible antisymmetric matrix $W\in GL_4(\mathbb{K}(z))$ such that $X^tWX=J$. Let $K=\mathbb{K}(z)(x_{i,j})$ be the Picard-Vessiot extension of $L$ and $K_1=K^{G_1}$. We have that $K_1$ is a quadratic extension of $\mathbb{K}(z)$ and the Galois group of $L$ over $K_1$ is isomorphic to $G_1$. Therefore $V=V_1\oplus V_2$ where $V_1$ and $V_2$ are symplectic subspaces invariant under $G_1$ and there exists $4\times 4$ projection matrix $P_1$ and $P_2$ with coefficients in $K_1$ such that $P_1+P_2=I_4$ and the columns of $P_1X$ and $P_2X$ span $V_1$ and $V_2$ respectively. Define $W_1=P_1^tWP_1$ and $W_2=P_2^tWP_2$.  The anti-symmetric matrices $W_1$ and $W_2$ define two linearly independent rank two Poisson structures with coefficients in a quadratic extension of $\mathbb{K}(z)$.
\end{proof}

Note that the two Poisson structures in the proposition correspond to two conjugate solutions in a quadratic extension of the exterior square of $L$.

\begin{prop}\label{prop2}
The kernel of a Poisson structure $W$ of a symplectic operator $L$ is an invariant vector space.
\end{prop}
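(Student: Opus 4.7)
The plan is to show directly that $\ker W$ is preserved by the natural connection associated with $L$, so that the Picard--Vessiot correspondence realises it as a Galois-invariant subspace of the solution space of $L$ --- which is exactly what ``invariant vector space'' means in the setting of Proposition~\ref{propgroups}.

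First, I would take an arbitrary $v\in\mathbb{K}(z)^{2n}$ with $Wv=0$, differentiate this relation, and substitute $W'=-A^tW-WA$ coming from the defining equation of a Poisson structure. The term $A^tWv$ vanishes by hypothesis, leaving $W(v'-Av)=0$, i.e.\ $v'-Av\in\ker W$. In other words, $\ker W$ is stable under the operator $v\mapsto v'-Av$, which is the connection on the differential module $M=(\mathbb{K}(z)^{2n},\partial_M)$ attached to $X'=AX$.

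Next, I would invoke the standard correspondence between sub-differential modules of $M$ and Galois-invariant subspaces of the solution space, concluding that $\ker W$ defines an invariant vector space. Equivalently, one can argue at the matrix level: if $X$ is a fundamental matrix then $C:=X^tWX$ has constant entries, every $g$ in the representation of the Galois group attached to $X$ satisfies $g^tCg=C$, so $\ker C\subset\mathbb{K}^{2n}$ is stabilised by $G$, and this $G$-invariant subspace is precisely the one corresponding to $\ker W$ via the identification of the solution space with $\mathbb{K}^{2n}$ through $X$.

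There is essentially no technical obstacle here: the Poisson equation was cooked up precisely so that $W$ becomes a morphism of differential modules from $M$ to its dual $M^*$, and the kernel of such a morphism is automatically a sub-module. The only point requiring care is the translation between ``kernel of $W$ inside $\mathbb{K}(z)^{2n}$'' and ``Galois-invariant subspace of the solution space'', which is handled transparently by the Picard--Vessiot dictionary.
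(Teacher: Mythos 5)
Your proof is correct, and your second, matrix-level argument (that $C=X^tWX$ has constant entries and $g^tCg=C$ for every $g$ in the representation of the Galois group, so $\ker C$ is $G$-stable) is essentially the paper's own proof, just spelled out in more detail. Your first argument, showing via $W(v'-Av)=0$ that $\ker W$ is a sub-differential-module and then invoking the Picard--Vessiot correspondence, is an equivalent and equally valid reformulation.
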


\begin{proof}
Let $X=(x_{i,j})$ be a fundamental matrix of solutions of $L$ and let $V$ be the vector space over $\mathbb{K}$ generated by the columns of $X$. The Galois group of $L$ acts on $V$ and it fixes $W$, for the Poisson structure has coefficients in the ground field. In particular, the Galois group stabilizes the kernel of the Poisson structure.
\end{proof}

\begin{cor}\label{cor}
\begin{itemize}
\item[i)] If $L$ is symplectic and irreducible, then all projective Poisson structures are projective symplectic structures and their Pfaffians have the same exponential type.
\item[ii)] An irreducible symplectic operator $L$ whose Galois group is isomorphic to $\mathbb{Z}_2 \ltimes G_1,\; G_1\subset SL_2(\mathbb{K})$ admits two linearly independent projective symplectic structures in a quadratic extension of $\mathbb{K}(z)$.
\item[iii)] Let $L$ be an irreducible operator admitting a symplectic structure $W_1$ and $W_2$ a linearly independent projective symplectic structure. Then there are two linear combinations of $W_1,W_2$ that are linearly independent rank two Poisson structures in a quadratic extension of $\mathbb{K}(z)$.
\end{itemize}
\end{cor}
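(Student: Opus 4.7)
For part (i), I would use the correspondence from the proof of Proposition \ref{propsymp}: a projective Poisson structure $\tilde W$ amounts to a semi-invariant antisymmetric constant form $J = X^t \tilde W X$ on the solution space $V$. Since the Galois group scales $J$ by a character, it preserves the kernel of $\tilde W$ as a bilinear form; irreducibility of $L$ forces this kernel to be trivial, making $\tilde W$ nondegenerate, i.e.\ a projective symplectic structure. Taking $\hbox{Pf}$ on both sides of $X^t \tilde W_i X = J_i$ yields $\hbox{det}(X)\,\hbox{Pf}(\tilde W_i) = \hbox{Pf}(J_i)$, so the ratio $\hbox{Pf}(\tilde W_1)/\hbox{Pf}(\tilde W_2) = \hbox{Pf}(J_1)/\hbox{Pf}(J_2)$ is a nonzero element of $\mathbb{K}$, and the two Pfaffians share the same exponential type.

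For part (ii), I would invoke Proposition \ref{prop1}: it produces two linearly independent rank-two Poisson structures $U_1 = P_1^t W P_1$ and $U_2 = P_2^t W P_2$ in a quadratic extension $K_1/\mathbb{K}(z)$, attached to two $W$-orthogonal $G_1$-invariant symplectic subspaces $V_1, V_2$. Since $W(V_1, V_2) = 0$, one has $W = U_1 + U_2$, so $W$ together with $U_1 - U_2$ gives two linearly independent invertible antisymmetric matrices in $K_1$, each of which is a projective symplectic structure.

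Part (iii) is the substantive converse and the step I expect to be hardest. Writing $W_1 = (X^{-1})^t J_1 X^{-1}$ (with $J_1$ antisymmetric $G$-invariant because $\lambda_1 = 0$) and $e^{\int\lambda_2} W_2 = (X^{-1})^t J_2 X^{-1}$ (with $J_2$ antisymmetric $G$-semi-invariant), I would introduce the constant matrix $C := J_1^{-1} J_2 \in M_{2n}(\mathbb{K})$. From $\hbox{det}(a J_1 + b J_2) = \hbox{det}(J_1)\hbox{det}(aI + bC)$ and the identity $\hbox{det}(M) = \hbox{Pf}(M)^2$ for antisymmetric $M$, the characteristic polynomial of $C$ is a perfect square, so its eigenvalues pair up as $c_1, c_3 \in \mathbb{K}$ and the degenerate forms in the pencil $\{aJ_1 + bJ_2\}$ are exactly $J_2 - c_i J_1$. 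Translating back, $U_i := -c_i W_1 + e^{\int\lambda_2} W_2$ satisfies $A^t U_i + U_i A + U_i' = 0$ by linearity (both $W_1$ and $e^{\int\lambda_2} W_2$ solve it), has vanishing Pfaffian and hence rank two, and $U_1, U_3$ are linearly independent because $c_1 \neq c_3$ (otherwise $J_2 = c_1 J_1$ and $W_2$ would be proportional to $W_1$).

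The delicate point is placing the $U_i$ in a quadratic extension. Here I would reuse $\hbox{det}(X)\,\hbox{Pf}(\tilde W) = \hbox{Pf}(J)$: because $W_1$ is genuinely symplectic, $\hbox{det}(X) = \hbox{Pf}(J_1)/\hbox{Pf}(W_1) \in \mathbb{K}(z)$, and then
$$e^{2\int\lambda_2} = \hbox{Pf}(J_2)\hbox{Pf}(W_1)/(\hbox{Pf}(J_1)\hbox{Pf}(W_2)) \in \mathbb{K}(z),$$
so $e^{\int\lambda_2}$ generates at most a quadratic extension $\mathbb{K}(z)(\sqrt{d})$ of $\mathbb{K}(z)$, and in this extension $U_i = -c_i W_1 + \sqrt{d}\,W_2$ is a legitimate linear combination of $W_1$ and $W_2$. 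The main obstacle is precisely this bookkeeping: the fact that $W_1$ is exactly symplectic (not merely projectively) rationalizes $\hbox{det}(X)$, which in turn constrains the multiplier $\lambda_2$ so that $e^{\int\lambda_2}$ is algebraic of degree at most $2$ over $\mathbb{K}(z)$.
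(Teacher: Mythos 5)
Your parts (i) and (ii) are sound and close to the paper's: (i) is the kernel argument of Proposition \ref{prop2} applied to the semi-invariant constant form $J=X^t\tilde WX$, plus the observation that all Pfaffians agree with $\det(X)^{-1}$ up to a constant; your (ii), with $W=U_1+U_2$ and $U_1-U_2$ as the two explicit invertible structures, is actually more explicit than the paper's one-line deduction. Part (iii) is where you genuinely diverge. The paper takes a full basis $W_1,\dots,W_p$ of projective symplectic structures, shows it lives in a multi-quadratic extension $\mathbb{F}$, argues that the degenerate locus $\hbox{Pf}(\sum\lambda_iW_i)=0$ is a hypersurface in $\mathbb{P}^{p-1}$, intersects it with the line through $W_1,W_2$ by Bezout, and obtains the second degenerate combination by the conjugation $\sqrt{w}\mapsto-\sqrt{w}$. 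Your pencil argument --- $C=J_1^{-1}J_2$, characteristic polynomial $q(x)^2$ because determinants of antisymmetric matrices are squares of Pfaffians, degenerate members $J_2-c_iJ_1$ at the two roots of $q$ --- is more elementary and more constructive (it is essentially what step 5(b)i of the algorithm computes), and your Pfaffian bookkeeping showing $e^{2\int\lambda_2}\in\mathbb{K}(z)$, hence a quadratic extension, is correct.

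There is, however, a genuine gap at the final step of (iii): the claim ``$c_1\neq c_3$, otherwise $J_2=c_1J_1$'' is false. If $c_1=c_3=c$, then $C-cI$ is nilpotent but need not vanish; for instance $J_2=J_1+E$ with $E$ antisymmetric of rank two and $J_1^{-1}E$ nilpotent has characteristic polynomial $(x-c)^4$ with $c=1$ while $J_2\neq J_1$. In that situation the pencil has only one degenerate member, so you would produce only one rank-two Poisson structure and the statement would fail --- so this is not cosmetic. The hypotheses do exclude it, but you must invoke them. First, $W_2$ cannot be rational: otherwise $-c_1W_1+W_2$ would be a rational rank-two Poisson structure whose kernel, by Proposition \ref{prop2}, is a rational invariant $2$-plane, contradicting irreducibility (this is exactly the paper's remark at the end of its proof). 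Hence $\sqrt{d}\notin\mathbb{K}(z)$, and the Galois automorphism with $\sqrt{d}\mapsto-\sqrt{d}$ is realized by some $g$ with $g^tJ_1g=J_1$ and $g^tJ_2g=-J_2$, so $g^{-1}Cg=-C$ and the set $\{c_1,c_3\}$ is stable under negation; since $c_1c_3\neq0$ ($J_2$ is invertible), this forces $c_3=-c_1\neq c_1$. With that substitution your argument is complete.
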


\begin{proof}
\begin{itemize}
\item[i)] If $L$ admits a projective Poisson structure that is not symplectic, then, after multiplication by a hyperexponential function, we may assume that the Poisson structure has rational coefficients. Now from Proposition \ref{prop2} it follows that its kernel is a non-trivial invariant vector space of dimension $2$, and thus $L$ is reducible. Thus if $L$ is irreducible, then all projective Poisson structure are symplectic structures. Now, since a solution to equation \eqref{eqsympext} has a Pfaffian equal, up to a constant factor, to the Wronskian of $L$ and the Pfaffian is not zero, they all have the same exponential type (the exponential type of the Wronskian).
\item[ii)] If $L$ is an irreducible symplectic operator with Galois group isomorphic to $\mathbb{Z}_2 \ltimes G_1$ where $G_1$ is a subgroup of $SL_2(\mathbb{K})$, then from Proposition \ref{prop1}, $L$ admits two linearly independent Poisson structures in a quadratic extension. In particular, it admits two projective Poisson structures and therefore, from i), these are projective symplectic structures.
\item[iii)] Let $W_1,\ldots,W_p$ be a basis of the projective symplectic structures, with $W_1$ rational. From i) we know all their Pfaffian have the same exponential type, and thus the Pfaffians of $W_2,\dots,W_p$ are rational. Thus the $W_i$ belong to a field extension $\mathbb{F}=\mathbb{K}(z,\sqrt{w_2},\dots,\sqrt{w_{p}})$, $w_i\in\mathbb{K}(z)$. So the Galois group $G=\hbox{Gal}(L/\mathbb{F})$ admits $p$ invariant symplectic structures $J_1,\dots,J_p$. It also admits Poisson structures which are the linear combinations of the $J_i$ with zero Pfaffian. The space of non invertible Poisson structures stabilized by $G$ is thus of dimension $p-1$. Thus $L$ should admit a continuum of dimension $p-1$ of non invertible Poisson structures in $\mathbb{F}$. A solution $W$ of \eqref{eqsympext} in $\mathbb{F}$ can be written as a linear combination of elements in $\sqrt{w_i}M_4(\mathbb{K}(z))$ and $M_4(\mathbb{K}(z))$. Acting the Galois group on the square roots concludes that each of these matrices are themselves solutions of \eqref{eqsympext}, and thus that $W$ is a linear combination of the $W_i$. Thus the $W_i$ form a basis of the Poisson structures with coefficients in $\mathbb{F}$. Therefore the equation
\begin{equation}\label{eqPoisson}
\hbox{Pf}\left(\sum\limits_{i=1}^p \lambda_i W_i \right)=0 \in \mathbb{K}(z)
\end{equation}
admits a continuum of solution of dimension $p-1$, and so is equivalent to a single equation in $(\lambda_1:\ldots:\lambda_p)\in\mathbb{P}^{p-1}(\mathbb{K})$. Bezout theorem implies that this projective variety intersects $\lambda_3=\dots=\lambda_p=0$, and thus that a linear combination of $\mu_1W_1+\mu_2 W_2$ gives a rank two Poisson structure. Now if $W_2$ is rational, then the resulting Poisson structure has coefficients in $\mathbb{K}(z)$, which would mean that $L$ is reducible, contrary to the hypothesis. Thus $W_2$ is not rational, and its exponential type has a square root. So by Galois action, $\mu_1W_1-\mu_2 W_2$ is also a rank two Poisson structure. These are distinct as $\mu_1,\mu_2\neq 0$, because $W_1,W_2$ are invertible.
\end{itemize}
\end{proof}

Example:
$$G=\left\langle \! \left(\begin{array}{cccc} \lambda & 0 &0 &0\\ 0 & 1/\lambda &0 &0 \\ 0&0 & \epsilon\lambda&0\\ 0&0&0&\epsilon/\lambda\\ \end{array}\right),\left(\begin{array}{cccc} 0 & 1 &0 &0\\ -1 & 0 &0 &0 \\ 0&0 & 0&1\\ 0&0&-1&0\\ \end{array}\right), \left(\begin{array}{cccc} 0 & 0 &1 &0\\ 0 & 0 &0 &1 \\ 1&0 & 0&0\\ 0&1&0&0\\ \end{array}\right)\! \right\rangle,$$
$\lambda\in\mathbb{K}^*,\epsilon=\pm 1$. This group is transitive, admits two subgroups of index $2$, more precisely generated by removing the third matrix, and restricting $\epsilon=1$. Each of these subgroups admits two symplectic structures, and the intersection, an index $4$ subgroup, admits three symplectic structures.

\section{The algorithm}

Note that if $L$ is projectively symplectic, then up to a multiplication of a hyperexponential function, we can ensure that the operator is then symplectic. Indeed, using the notation in the proof of Proposition \ref{propsymp}, if $X^t \exp({\int \lambda(z) dz})W X=J$, then for $Y=\exp({\frac{1}{2}\int \lambda(z) dz})X$, we have $Y^tWY=J$.

We know that if $L$ has Liouvillian solutions, then we can always obtain a factorization with operators of order $2$, either over the base field or in quadratic extension of it. To obtain an explicit expression of the Liouvillian solutions, we apply the Kovacic algorithm to the order two factors: More precisely the Ulmer-van Hoeij-Weil version of Kovacic algorithm \cite{ULMER1996,VANHOEIJ2005}. Indeed, the important property of it is that it only uses the computation of rational solutions of symmetric powers, which will be here much easier than the computation of hyperexponential solutions for an operator with non-rational coefficients.\\

\noindent\underline{\sf SymplecticKovacic}\\
\textsf{Input:} A symplectic differential operator $L=\partial^4+a(z)\partial^3+b(z)\partial^2+c(z)\partial+d(z)\in\mathbb{K}(z)[\partial]$.\\
\textsf{Output:} A basis of the vector space of Liouvillian solutions of $L$.\\
\begin{enumerate}
\item Factorize $L$ in $\mathbb{K}(z)[\partial]$ \cite{VANHOEIJ1997}.
\item If $L=L_1L_2L_3L_4$ with $L_i$ of order $1$ for $i=1,2,3,4$ solve by variation of constant and return the solutions.
\item If there is a single factor $\tilde{L}$ of order $2$, then $L=L_1L_2L_3$ where $\tilde{L}=L_i$ for some $i\in\{1,2,3\}$ and the other two factors are of order $1$. Apply Kovacic algorithm to $\tilde{L}$ \cite{ULMER1996}.
\begin{enumerate}
\item If $\tilde{L}$ is solvable, then solve $L$ by variation of constants.
\item If $\tilde{L}$ is not solvable, compute hyperexponential solutions of $L$ \cite{PFLUGEL1997}.
\begin{enumerate}
\item If there is one, then $L=ML_0$ where $M$ is of order $3$ and $L_0$ is of order $1$. Compute hyperexponential solution of $M$ \cite{PFLUGEL1997}.
\begin{enumerate}
\item If there is one, then $L=NM_0L_0$ where $N$ is of order $2$ and $M_0$ is of order $1$. Solve $M_0L_0$ by variation of constants and return their solutions.
\item If there are no hyperexponential solutions to $M$, return a solution of $L_0$.
\end{enumerate}
\item If there are no hyperexponential solutions to $L$, return $[]$.
\end{enumerate}
\end{enumerate}
\item If there are two factors of order $2$, $L=L_1L_2$, apply Kovacic algorithm \cite{ULMER1996} to them
\begin{enumerate}
\item If $L_1$ is not solvable, return the Liouvillian solutions of $L_2$.
\item If $L_1$ and $L_2$ are solvable compute the solution through variation of constants
\item If $L_1$ is solvable but $L_2$ is not, compute an LCLM factorization of $L$ \cite{VANHOEIJ1997}. If it has two factors, solve them by Kovacic algorithm and return the Liouvillian solutions. Else return $[]$.
\end{enumerate}
\item Else $L$ is irreducible. Compute linearly independent projective Poisson structures of $L$
\begin{enumerate}
\item If there are less than $2$, return $[]$.
\item Else denote two projective Poisson structures by $W_1$,$W_2$, such that $W_1$ has rational coefficients and $W_2$ has coefficients in a qua\-dra\-tic extension. Let $w(z)\in\mathbb{K}(z)$ be such that the coefficients of $W_2$ are in $\mathbb{K}(z,\sqrt{w(z)})$.
\begin{enumerate}
\item Solve $\hbox{Pf}(W_1+\lambda W_2)=0$, and compute the two conjugate kernels $V_1,V_2$ with coefficients in $\mathbb{K}(z,\sqrt{w(z)})$ of $W_1+\lambda W_2$ for the two solutions $\lambda$.
\item Compute the companion differential system associated to $L$ restricted to $V_1$. Use a cyclic vector to compute an operator $\tilde{L}$ of order two with coefficients in $\mathbb{K}(z,\sqrt{w(z)})$.
\begin{enumerate}
\item If the symmetric square of $\tilde{L}$ has non-trivial solutions in $\mathbb{K}(z,\sqrt{w(z)})$, compute two linearly independent Liouvillian solutions to $\tilde{L}$ \cite{ULMER1996} of the form
$$e^{\int\sqrt{\alpha(z)+\sqrt{w(z)}\beta(z)}dz}.$$
\item Else, if for some $i\in\{6,8,12\}$ the $i$-th symmetric power of $\tilde{L}$ has non-trivial solutions in $\mathbb{K}(z,\sqrt{w(z)})$, use van Hoeij-Weil algorithm \cite{VANHOEIJ2005} to compute a basis of solutions to $\tilde{L}$ of the form
$$e^{\int \alpha(z)+\sqrt{w(z)}\beta(z) dz} F(p(z)+\sqrt{w(z)}r(z))$$
where $F$ is a solutions of a standard equation.
\item Else $\tilde{L}$ is not solvable.
\end{enumerate}
\end{enumerate}
Return the Liouvillian solutions to $\tilde{L}$ obtained in ii. together with their conjugate $\sqrt{w(z)} \rightarrow -\sqrt{w(z)}$.
\end{enumerate}
\end{enumerate}

\begin{thm}
The algorithm \underline{\sf SymplecticKovacic} returns the $\;$ vector space of Liouvillian solutions of $L$.
\end{thm}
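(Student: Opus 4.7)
The plan is to verify correctness by case analysis matching the structure of the algorithm, relying on the classification in Proposition~\ref{propgroups}. First, observe that if $L$ factors in $\mathbb{K}(z)[\partial]$ then the Galois group lies in a block-triangular subgroup (case i of Proposition~\ref{propgroups}), and the Liouvillian solutions of $L$ are obtained from Liouvillian solutions of the factors by standard variation of constants; correctness in steps 2, 3, 4 then reduces to correctness of Kovacic's algorithm \cite{ULMER1996} on order-two factors together with correctness of hyperexponential solvers \cite{PFLUGEL1997}. The subtle sub-case is 4(c): when $L=L_1L_2$ with $L_1$ solvable but $L_2$ not, a Liouvillian solution of $L$ that is not a solution of $L_1$ would yield, by pullback, a Liouvillian solution of $L_2$; hence $L$ has a Liouvillian solution not in $\ker L_1$ if and only if $L$ admits a direct-sum (LCLM) decomposition where the second summand is solvable, which is exactly what is tested.

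The heart of the proof is the irreducible case (step 5). If $L$ is irreducible and has a Liouvillian solution, then by Proposition~\ref{propgroups} its Galois group must fall in case ii), so it is isomorphic to $\mathbb{Z}_2 \ltimes G_1$ with $G_1 \subset SL_2(\mathbb{K})$ solvable. By Corollary~\ref{cor}(ii) this guarantees the existence of at least two linearly independent projective symplectic structures, so returning $[\,]$ in step 5(a) is justified: fewer than two projective Poisson structures forces the Galois group to be $SP_4(\mathbb{K})$, which is non-solvable. Conversely, if two such structures $W_1, W_2$ exist (with $W_1$ rational and $W_2$ defined over a quadratic extension $\mathbb{K}(z,\sqrt{w(z)})$), Corollary~\ref{cor}(iii) produces a linear combination $W_1+\lambda W_2$ that is a rank-two Poisson structure; by Proposition~\ref{prop2} its kernel $V_1$ is a two-dimensional invariant subspace of the solution space over $\mathbb{K}(z,\sqrt{w(z)})$, and the Galois conjugate $V_2$ obtained by $\sqrt{w(z)} \mapsto -\sqrt{w(z)}$ is another invariant subspace with $V_1 \oplus V_2 = \mathbb{K}^4$.

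Restricting the companion system of $L$ to $V_1$ and picking a cyclic vector produces an order-two operator $\tilde{L}$ over $\mathbb{K}(z,\sqrt{w(z)})$; its Liouvillian solutions, together with their Galois conjugates spanning $V_2$, furnish a basis of Liouvillian solutions of $L$, since Liouvillianity is preserved by the finite extension. Applying the Ulmer--van Hoeij--Weil version of Kovacic's algorithm to $\tilde{L}$ in steps 5(b)ii.A--C exhausts the three solvable Galois group classes in $SL_2$ (reducible, imprimitive dihedral, and the three primitive finite groups with invariants in degree $6$, $8$, $12$), yielding solutions of the asserted hyperexponential or pullback form; if none of these tests succeeds, $G_1$ is $SL_2(\mathbb{K})$ and $\tilde{L}$ has no Liouvillian solutions.

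The main obstacle is the justification of step 5(b)ii, namely that the Ulmer--van Hoeij--Weil algorithm, designed for operators over $\mathbb{K}(z)$, remains correct and effective when the base field is the quadratic extension $\mathbb{K}(z,\sqrt{w(z)})$. Concretely, one must argue that computing rational solutions of symmetric powers of $\tilde{L}$ over $\mathbb{K}(z,\sqrt{w(z)})$ (which is still a rational function field of transcendence degree one over $\mathbb{K}$, so that the tools of \cite{BARKATOU1999, BURGER2004} apply) is sufficient to detect the Galois group of $\tilde{L}$, and that the explicit solution formulas of \cite{VANHOEIJ2005} carry over verbatim to this extended base. Once this is granted, the completeness of the output in step 5(b) is immediate: any Liouvillian solution of $L$ decomposes uniquely along $V_1 \oplus V_2$, and each component is produced by the Kovacic step on $\tilde{L}$ or its conjugate.
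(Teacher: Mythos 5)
Your proposal is correct and follows essentially the same route as the paper's own proof: a case analysis over the factorization types allowed by Proposition~\ref{propgroups}, with the reducible cases handled by Kovacic's algorithm on order-two factors plus variation of constants, and the irreducible case handled via Corollary~\ref{cor}(ii)--(iii) and Proposition~\ref{prop2} to extract the conjugate invariant subspaces $V_1,V_2$ over a quadratic extension and reduce to an order-two operator $\tilde{L}$. The only point you raise beyond the paper's argument --- that the Ulmer--van Hoeij--Weil machinery must be justified over $\mathbb{K}(z,\sqrt{w(z)})$ --- is a legitimate observation, which the paper itself only addresses implicitly (in the complexity discussion of rational solutions over quadratic extensions).
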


\begin{proof}
According to Proposition \ref{propgroups} the possible factorizations of $L$ are for factors with orders:

i) $1,1,1,1$, ii) $1,1,2$, iii) $1,2,1$, iv) $2,1,1$, v) $2,2$, vi) $4$.

The first step tests whether we are in one of the first four cases.

The second step deals with case i), where we can obtain a basis of the space of Liouvillian solutions, which has dimension $4$, by using variation of constants.

The third step tests whether we are in cases ii), iii) or iv). If the order $2$ factor $\tilde{L}$ is solvable, we can obtain a basis of the space of Liouvillian solutions, which has dimension $4$, by using variation of constants after finding Liouvillian solutions to the factor of second order. If $\tilde{L}$ is not solvable, step 3(b) test whether we are in case iii), in which case the space of Liouvillian solutions has dimension two, and a basis can be obtain by variation of constants, or in case ii), in which case the space of Liouvillian solutions has dimension one, and a basis can is obtain by computing hyperexponential solutions to $L$. If we are not in either of these two cases, the only possible factorization of $L$ is of the form case ii) and there are no Liouvillian solutions, for the order two factor $\tilde{L}$ is not solvable.

The fourth step addresses  the case $v)$. If only the right factor is solvable, then the space of Liouvillian solution has dimension two and using Kovacic algorithm one can find a basis of it. If both factors are solvable, then the space of Liouvillian solution has dimension four and using variation of constants one can find a basis of it, after using Kovacic algorithm on both factors. If only the left factor is solvable, then step 4(c) test whether the Galois group has a representation $2\times 2$ block diagonal. In which case the space of Liouvillian solution has dimension two, as one of the two factors of the LCLM factorization is solvable. 

The fifth step deals with the irreducible case, which is case v). According to Corollary \ref{cor} ii), $L$ is not solvable if $L$ does not admit at least two linearly independent projective Poisson structures in a quadratic extension. Step 5(a) rules this case out. When there are two linearly independent projective Poisson structures in a quadratic extension, Corollary \ref{cor} iii), implies that a linear combination of them will produce two non-trivial and non-invertible Poisson structures. These Poisson structures are obtained in Step 5(b)i. Their kernels are two invariant vector spaces $V_1,V_2$ of solutions of $L$. None of these kernels $V_1$ and $V_2$ is rational, or else $L$ would possess an invariant vector space of dimension $2$, and thus would factorize. Therefore $V_1$ and $V_2$ are conjugate by an automorphism of a quadratic extension. In particular the solutions in $V_1$ are Liouvillian if and only if the solution in $V_2$ are Liouvillian. Step 5(b)ii computes the operator $L$ restricted to $V_1$, tests using Kovacic algorithm whether this restriction is solvable, and in such case computes its Liouvillian solutions. The solutions of $V_2$ when they are Liouvillian can be obtained by taking the conjugates of the Liouvillian solutions in $V_1$. 
\end{proof}

\section{Complexity and examples}

The complexity of computing rational solutions of differential operator does not depend only on the degree and order of the operator, but also on the local exponents at the singularities  \cite{BARKATOU1999}. The same applies to operator factorization \cite{VANHOEIJ1997}, and therefore also to our work. Because of this, we do not aim at obtaining low complexity in terms of the degree, order and coefficients height. The objective is more to obtain a workable algorithm on reasonable examples rather than on cases of worst complexity. The most expensive part is in the three last steps where finite groups are tested and so our algorithm was designed to optimize this part.
\begin{itemize}
\item The factorization in a quadratic extension allows to compute symmetric powers of an order $2$ operator instead of $4$
\item The finite Galois groups are tested using invariant instead of semi-invariants. Even if using semi invariant would allow lower order operators, the search of hyperexponential solutions in a quadratic extension is equivalent to searching hyperexponential solutions of an operator of order $14$, which is too expensive.
\item  The rational solutions in $\mathbb{K}(z,\sqrt{w(z)})$ are searched by first constructing a differential system, which is cheap, and secondly by constructing a universal denominator for each of the two unknowns by computing possible local exponents at the singularities. The rational solutions are then finally obtained using linear algebra.
\end{itemize} 

We present a table of examples obtained by considering the annihilator of $f(\sqrt{z})$ where $f$ is a solution of $f''+(z^{2n+1}+z+1)f=0$. This produces a symplectic operator of order $4$, which is bi-projectively symplectic as the Galois group will be a subgroup of $\mathbb{Z}_2 \ltimes SL_2(\mathbb{K})$. This also ensures that the algorithm do not use early termination paths to avoid the last steps. The computations were done on a Macbook pro 2013 2.8 Ghz.
\begin{center}
\begin{tabular}{|c|c|c|c|c|c|}\hline
$n$ & 0 &  1 & 2 & 3 & 4 \\\hline 
degree & 3 & 7 & 11 & 15 & 19\\\hline
time& 2.4 s &  5 s  & 9.2s  & 37.5 s & 5757s \\\hline
\end{tabular}
\end{center}

We present now the solutions of several symplectic operators with finite Galois groups. The pullbacks are obtained using the formulas in \cite{VANHOEIJ2005}, where the pullback functions are in a quadratic extension.

\textbf{A $D_8$ example}. We consider the LCLM of the following operator with its conjugate
$$Dz^2+\frac{3(20z+37\sqrt{z}+21}{256z^2(\sqrt{z}+1)^2}.$$
The solutions are found in $1.7s$:
$$\sqrt{z} (1\pm\sqrt{z})^{\frac{1}{4}}e^{\frac{1}{16}\int \frac{1}{z\sqrt{1\pm\sqrt{z}}} dz}, \sqrt{z} (1\pm\sqrt{z})^{\frac{1}{4}}e^{-\frac{1}{16}\int \frac{1}{z\sqrt{1\pm\sqrt{z}}} dz}$$
\indent \textbf{An $A_5$ example}. We consider the LCLM of the following operator with its conjugate
$$Dz^2+\frac{1}{2z}Dz+\frac{739z^{3/2}+864z^2+611\sqrt{z}-314z+800}{14400z^2(z-1)^2}.$$
The solutions are found in $12.4s$
{\scriptsize $$\sqrt[12]{\frac{z^2P(z)(\sqrt{z}-1)^2}{(5589\sqrt{z}-800)^3}}\mathcal{L}\left(-\frac{1}{6},5,\sqrt{99 \frac{(27945z-19967\sqrt{z}+1600)^2}{(5589\sqrt{z}-800)^3(1-\sqrt{z})}}\right)$$}
together with the conjugates $\sqrt{z} \mapsto -\sqrt{z}$, where $\mathcal{L}$ a basis a solutions of the Legendre differential equation (whose solutions can also be written in terms of the hypergeometric function) and
{\scriptsize $$P= 251894530944z^2-360031369239z^{3/2}+134021894211z-$$
$$17568425600\sqrt{z}+765440000 $$}
\indent \textbf{An $A_4$ example}. We consider the LCLM of the following operator with its conjugate
$$Dz^2+\frac{108z^2+648z^{3/2}+1505z+1498\sqrt{z}+560}{576(\sqrt{z}+1)^2z^2(2+\sqrt{z})^2}.$$
The solutions are found in $17s$
{\scriptsize $$\left(\frac{(189z^2+810z^{\frac{3}{2}}+1118z+526\sqrt{z}+20)^{24}z^{10}}{P(z)Q(z)^{14}(2+\sqrt{z})^6(\sqrt{z}+1)^6}\right)^{\frac{1}{24}}$$
$$\;_2F_1\left(\frac{13}{24},\frac{25}{24},\frac{5}{4},\frac{P(z)}{45(z+3\sqrt{z}+2)^2Q(z)^2} \right)$$}
with
{\scriptsize $$P=67191201z^6+863886870z^{11/2}+4900709061z^5+16136882532z^{9/2}+$$
$$34114858452z^4+48314544768z^{7/2}+46335734636z^3+29648385408z^{5/2}\! +$$
$$12093966336z^2+2856633184z^{3/2}+318081360z+10315200\sqrt{z}+104000$$
$$Q=945z^2+3240z^{3/2}+3354z+1052\sqrt{z}+20$$}
\indent \textbf{A $S_4$ example}. We consider the LCLM of the following operator with its conjugate
$$Dz^2+\frac{4z^{9/2}+4z^{3/2}+4z+3}{16z^2}$$
The solutions are found in $9.3s$
{\scriptsize $$((\sqrt{z}+1)^3z^{7/2}(252z+311\sqrt{z}+63)^{-3}(144027072z^3+534597840z^{5/2}+$$
$$774164272z^2\! +550356683z^{3/2}\! +198862573z+34349049\sqrt{z}+2250423))^{1/8}$$
$$\mathcal{L}\left(-\frac{1}{4},\frac{1}{3},\sqrt{-\frac{28(1512z^{3/2}+2815z+1496\sqrt{z}+189)^2}{5(252z+311\sqrt{z}+63)^3}} \right) $$}
\indent \textbf{An application in Hamiltonian systems}. We consider the potential
$$V=q_1q_2^2+q_1q_2q_3+q_1q_3^2+q_1^2+q_2^2+q_3^2.$$
This potential admits a particular solution along the line $q_2=q_3=0$. The variational equation is given by
$$\left(\begin{array}{c} \ddot{X}\\ \ddot{Y} \\ \ddot{Z} \end{array} \right)=  \left(\begin{array}{ccc} -2&0&0\\ 0&-2q_1(t)-2& -q_1(t) \\ 0&-q_1(t) & -2q_1(t)-2 \\\end{array} \right)
\left(\begin{array}{c} X\\ Y \\ Z \end{array} \right)$$
Now making a time change by denoting $q_1(t)=z$, we obtain in the lower right $2\times 2$ invariant block
$$(1-z^2)\left(\begin{array}{c} \ddot{Y} \\ \ddot{Z}\\ \end{array} \right)-z\left(\begin{array}{c} \dot{Y} \\ \dot{Z}\\ \end{array} \right)+\left(\begin{array}{cc}  2(z+1)& z \\ z & 2(z+1) \\\end{array} \right) \left(\begin{array}{c} Y \\ Z\\ \end{array} \right)=0$$
Applying cyclic vector on $Y(z)$ we obtain the following order $4$ operator $L=$
$$Dz^4+\frac{2(2z^2+1)}{z(z^2-1)}Dz^3-\frac{4z^5+2z^4-4z^3-3z^2-2}{z^2(z^2-1)^2}Dz^2-$$
$$\frac{4(z^2-z+1)}{(z^2-1)(z-1)z}Dz+\frac{3z^4+8z^3+2z^2+4}{z^2(z^2-1)^2}$$
This operator admits $2$ linearly independent projective symplectic structures of same type:
{\scriptsize $$ \frac{(z^2-1)^{3/2}}{z^2}\left( \begin {array}{cccc} 0&-4\,{\frac {z}{z-1}}&{\frac {2\,{z}^{2}
+1}{{z}^{2}-1}}&z\\ \noalign{\medskip}4\,{\frac {z}{z-1}}&0&-z&0
\\ \noalign{\medskip}-{\frac {2\,{z}^{2}+1}{{z}^{2}-1}}&z&0&0
\\ \noalign{\medskip}-z&0&0&0\end {array} \right),$$
$$\frac{(z^2-1)^{3/2}}{z^2}\left( \begin {array}{cccc} 0&-{\frac {3\,{z}^{2}-2}{{z}^{2}-1}}&-6\,
{\frac {z}{{z}^{2}-1}}&-2\\ \noalign{\medskip}{\frac {3\,{z}^{2}-2}{{z
}^{2}-1}}&0&{\frac {4\,{z}^{2}-1}{{z}^{2}-1}}&z\\ \noalign{\medskip}6
\,{\frac {z}{{z}^{2}-1}}&-{\frac {4\,{z}^{2}-1}{{z}^{2}-1}}&0&{z}^{2}-
1\\ \noalign{\medskip}2&-z&-{z}^{2}+1&0\end {array} \right)
$$}
It is not symplectic despite coming from a Hamiltonian system due to the algebraic change of variable. Indeed the base field we should consider here contains $\sqrt{1-z^2}$, but as it does not appear in the coefficients, we ``forget it'' in the computations. This adds a finite extension on the Galois group as the base field becomes smaller, but it has no consequences on the Liouvillian solvability of it. Actually algebraic changes of variable never pose a problem to our algorithm, as the resulting operator will always have a Galois group in $PSP_4(\mathbb{K})$. 

\begin{prop}\label{prop6}
The Lie groups of $GL_{4}(\mathbb{K})$ which are finite extensions of $SP_{4}(\mathbb{K})$ are of the form
$$\{M\in M_{2n}(\mathbb{K}), \; \exists \lambda\in\mathbb{U}_p, \; M^t JM=\lambda J\} \subset PSP_4(\mathbb{K})$$
where $\mathbb{U}_p$ is the set of $p$-roots of unity.
\end{prop}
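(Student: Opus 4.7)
The plan is to determine the normalizer $N:=N_{GL_4(\mathbb{K})}(SP_4(\mathbb{K}))$, observe that any $G\supseteq SP_4(\mathbb{K})$ with $G/SP_4(\mathbb{K})$ finite must lie in $N$ (since $SP_4(\mathbb{K})$, being connected, is the identity component of $G$ and hence normal), and then describe $N/SP_4(\mathbb{K})$ explicitly.

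First I would prove the equality $N=PSP_4(\mathbb{K})$. The inclusion $PSP_4(\mathbb{K})\subseteq N$ is a direct computation: from $M^tJM=\lambda J$ one gets $(M^{-1})^tJM^{-1}=\lambda^{-1}J$, and hence $(MSM^{-1})^tJ(MSM^{-1})=\lambda\cdot\lambda^{-1}J=J$ for every $S\in SP_4(\mathbb{K})$. For the reverse inclusion, let $M\in N$ and set $\tilde J=M^tJM$, which is invertible and antisymmetric. The condition $MSM^{-1}\in SP_4(\mathbb{K})$ rewrites as $S^t\tilde JS=\tilde J$. Using $S^t=JS^{-1}J^{-1}$ (which follows from $S^tJS=J$), a one-line manipulation shows that $\Phi:=J^{-1}\tilde J$ satisfies $S^{-1}\Phi S=\Phi$ for every $S\in SP_4(\mathbb{K})$. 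The standard representation of $SP_4(\mathbb{K})$ on $\mathbb{K}^4$ is irreducible, so Schur's lemma forces $\Phi=\lambda I$ for some $\lambda\in\mathbb{K}^*$, giving $\tilde J=\lambda J$ and $M\in PSP_4(\mathbb{K})$.

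Next, the map $\chi:PSP_4(\mathbb{K})\to\mathbb{K}^*$ defined by $M^tJM=\chi(M)J$ is a surjective algebraic character with kernel exactly $SP_4(\mathbb{K})$, so it induces an isomorphism $PSP_4(\mathbb{K})/SP_4(\mathbb{K})\cong\mathbb{K}^*$. If $G$ is a finite extension of $SP_4(\mathbb{K})$ inside $GL_4(\mathbb{K})$, then $G\subseteq PSP_4(\mathbb{K})$ and $\chi(G)$ is a finite subgroup of $\mathbb{K}^*$. Since $\mathbb{K}$ is algebraically closed, $\chi(G)=\mathbb{U}_p$ for some integer $p\geq1$, and therefore
\[
G=\chi^{-1}(\mathbb{U}_p)=\{M\in GL_4(\mathbb{K}):\exists\,\lambda\in\mathbb{U}_p,\ M^tJM=\lambda J\},
\]
which is the claimed form.

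The main obstacle is the Schur-lemma step: one must verify both the centralizing identity $\Phi S=S\Phi$ and the irreducibility of the standard representation of $SP_4(\mathbb{K})$ on $\mathbb{K}^4$ (the latter is classical but deserves a citation). Everything else is routine bookkeeping with the short exact sequence $1\to SP_4(\mathbb{K})\to PSP_4(\mathbb{K})\to\mathbb{K}^*\to1$ combined with the classification of finite subgroups of $\mathbb{K}^*$ over an algebraically closed field.
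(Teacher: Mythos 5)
Your proof is correct, and it takes a genuinely different route from the paper. The paper disposes of this proposition in a few lines by appealing to the classification of Lie subgroups of $SL_4(\mathbb{K})$ in Hartmann's work (already used for Proposition \ref{propgroups}): among the groups in that list, the ones with identity component $SP_4(\mathbb{K})$ are the scalar extensions $\langle SP_4(\mathbb{K}),\omega I_4\rangle$ with $\omega^4=1$, and passing from $SL_4(\mathbb{K})$ to $GL_4(\mathbb{K})$ only adds further scalar generators $\omega I_4$ with $\omega$ a root of unity; since $\omega I_4$ satisfies $(\omega I_4)^tJ(\omega I_4)=\omega^2 J$, these groups coincide with the sets in the statement. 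Your argument instead computes the normalizer $N_{GL_4(\mathbb{K})}(SP_4(\mathbb{K}))=PSP_4(\mathbb{K})$ directly via Schur's lemma applied to $\Phi=J^{-1}M^tJM$ (the identity $S^{-1}\Phi S=\Phi$ does follow from $S^t=JS^{-1}J^{-1}$ as you indicate, and the standard representation of $SP_4(\mathbb{K})$ is irreducible since the group acts transitively on nonzero vectors), and then classifies the intermediate groups through the character $\chi$ with kernel $SP_4(\mathbb{K})$; the observation $G=\chi^{-1}(\chi(G))$ for $G\supseteq\ker\chi$ closes the argument. Your route is self-contained, avoids reliance on an external classification, and generalizes verbatim to $SP_{2n}(\mathbb{K})$ for arbitrary $n$; the paper's route is shorter only because the classification is already loaded for the earlier group-theoretic proposition. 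The one point worth making explicit in your write-up is the preliminary step that a connected closed subgroup of finite index in a Lie (algebraic) group equals the identity component, which is what forces $SP_4(\mathbb{K})=G^0$ to be normal in $G$ and hence $G\subseteq N$.
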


\begin{proof}
Using the classification of \cite{HARTMANN2002}, we look for Lie group whose identity component are $SP_4(\mathbb{K})$. In $SL_4(\mathbb{K})$ these groups are diagonal extensions of $SP_4(\mathbb{K})$ with an additional generator of the form $\omega I_4$ with $\omega^4=1$. The group $GL_4(\mathbb{K})$ is itself a diagonal extension of $SL_4(\mathbb{K})$ and thus finite extensions of $SP_4(\mathbb{K})$ in $GL_4(\mathbb{K})$ are obtained by adding a generator of the form $\omega I_4$ where $\omega$ is a root of unity. 
\end{proof}

Applying the algorithm  \underline{\sf SymplecticKovacic} returns $[]$, meaning that there are no Liouvillian solutions, and thus that the potential is not integrable \cite{MORALES1999}. Still, the existence of two symplectic structures of the same exponential type allows us to build two linearly independent Poisson structures, and thus implies that the operator is the LCLM of two operators of order $2$:
$$Dz^2+\frac{zDz}{z^2-1}-\frac{3z+2}{z^2-1},\; Dz^2+\frac{zDz}{z^2-1}-\frac{z+2}{z^2-1}.$$

\textbf{Conclusion}. We produced an algorithm for computing Liouvillian solutions of symplectic operators of order $4$ whose execution time is manageable on reasonable examples. The key point is that the symplectic group admits much less primitive/imprimitive finite groups than $SL_4(\mathbb{K})$, allowing the i\-den\-ti\-fi\-ca\-tion of solvable Galois groups to be much faster. This allows in particular to study irreducible order $4$ operators which where before intractable. One of the application is the study of variational equations in Hamiltonians with $3$ degrees of freedom, which was impossible before as generally the variational equation has its Galois group in $PSP_4(\mathbb{K})$ and so it does not necessarily decouple into smaller order operators. Possible future enhancements include early termination by exponent analysis, optimization of the universal denominator, using the Poisson structures to speed up the factorization in reducible cases, optimization of the pullback expressions for the finite case representation, minimization of iterated integrals in the representation of the solutions, and a better handing of quadratic extensions. The code is available at \url{http://combot.perso.math.cnrs.fr/software.html}.


\bibliographystyle{plain}
\bibliography{SK}

\begin{thebibliography}{10}

\bibitem{APARICIO2013}
Ainhoa Aparicio-Monforte, Elie Compoint, and Jacques-Arthur Weil.
\newblock A characterization of reduced forms of linear differential systems.
\newblock {\em J. Pure Appl. Algebra}, 217(8):1504--1516, 2013.

\bibitem{BARKATOU1999}
Moulay~A. Barkatou.
\newblock On rational solutions of systems of linear differential equations.
\newblock {\em J. Symbolic Comput.}, 28(4-5):547--567, 1999.
\newblock Differential algebra and differential equations.

\bibitem{BURGER2004}
Reinhold Burger, George Labahn, and Mark van Hoeij.
\newblock Closed form solutions of linear {ODE}s having elliptic function
  coefficients.
\newblock In {\em I{SSAC} 2004}, pages 58--64. ACM, New York, 2004.

\bibitem{HANANY2001}
Amihay Hanany and Yang-Hui He.
\newblock A monograph on the classification of the discrete subgroups of {$\rm
  SU(4)$}.
\newblock {\em J. High Energy Phys.}, (2):Paper 27, 12, 2001.

\bibitem{HARTMANN2002}
Julia Hartmann.
\newblock Invariants and differential {G}alois groups in degree four.
\newblock In {\em Differential {G}alois theory ({B}polhkedlewo, 2001)},
  volume~58 of {\em Banach Center Publ.}, pages 79--87. Polish Acad. Sci. Inst.
  Math., Warsaw, 2002.

\bibitem{VANHOEIJ2005}
Mark~van Hoeij and Jacques-Arthur Weil.
\newblock Solving second order linear differential equations with {K}lein's
  theorem.
\newblock In {\em I{SSAC}'05}, pages 340--347. ACM, New York, 2005.

\bibitem{KOVACIC1986}
Jerald~J. Kovacic.
\newblock An algorithm for solving second order linear homogeneous differential
  equations.
\newblock {\em J. Symbolic Comput.}, 2(1):3--43, 1986.

\bibitem{MORALES1999}
Juan~J. Morales~Ruiz.
\newblock {\em Differential {G}alois theory and non-integrability of
  {H}amiltonian systems}.
\newblock Modern Birkh\"auser Classics. Birkh\"auser/Springer, Basel, 1999.
\newblock [2013] reprint of the 1999 edition [MR1713573].

\bibitem{PFLUGEL1997}
E.~Pfl\"ugel.
\newblock An algorithm for computing exponential solutions of first order
  linear differential systems.
\newblock In {\em Proceedings of the 1997 {I}nternational {S}ymposium on
  {S}ymbolic and {A}lgebraic {C}omputation ({K}ihei, {HI})}, pages 164--171.
  ACM, New York, 1997.

\bibitem{VANDERPUT2003}
Marius van~der Put and Michael~F. Singer.
\newblock {\em Galois theory of linear differential equations.}
\newblock Grundlehren der mathematischen Wissenschaften: 328. Berlin ; New York
  : Springer, c2003., 2003.

\bibitem{ULMER1996}
Felix Ulmer and Jacques-Arthur Weil.
\newblock Note on {K}ovacic's algorithm.
\newblock {\em J. Symbolic Comput.}, 22(2):179--200, 1996.

\bibitem{VANHOEIJ1997}
Mark van Hoeij.
\newblock Factorization of differential operators with rational functions
  coefficients.
\newblock {\em J. Symbolic Comput.}, 24(5):537--561, 1997.

\end{thebibliography}

\end{document}